\DeclareMathOperator{\Proj}{Proj}
\DeclareMathOperator{\Hom}{Hom}
\DeclareMathOperator{\Syz}{Syz}
\DeclareMathOperator{\rk}{rk}
\DeclareMathOperator{\Ext}{Ext}
\DeclareMathOperator{\chara}{char}
\DeclareMathOperator{\Spec}{Spec}
\DeclareMathOperator{\im}{im}
\newcommand{\eps}{\varepsilon}
\begin{document}
\swapnumbers
\theoremstyle{definition}
\newtheorem{Le}{Lemma}[section]
\newtheorem{Def}[Le]{Definition}
\newtheorem*{DefB}{Definition}
\newtheorem{Bem}[Le]{Remark}
\newtheorem{Ko}[Le]{Corollary}
\newtheorem{Theo}[Le]{Theorem}
\newtheorem*{TheoB}{Theorem}
\newtheorem{Bsp}[Le]{Example}
\newtheorem{Be}[Le]{Observation}
\newtheorem{Prop}[Le]{Proposition}
\newtheorem{Sit}[Le]{Situation}
\newtheorem{Que}[Le]{Question}
\newtheorem*{Con}{Conjecture}
\newtheorem{Dis}[Le]{Discussion}
\newtheorem{Prob}[Le]{Problem}
\newtheorem{Konv}[Le]{Convention}

\title[An inclusion result for dagger closure]{An inclusion result for dagger closure in certain section rings of abelian varieties}
\author{Axel St\"abler}
\address{Universit\"at Osnabr\"uck, Fachbereich 6: Mathematik/Informatik,
Albrechtstr. 28a,
49069 Osnabr\"uck, Germany}
\email{axel.staebler@uni-osnabrueck.de}
\curraddr{Johannes Gutenberg-Universi\"at Mainz\\ Fachbereich 08\\
Staudingerweg 9\\
55128 Mainz\\
Germany}

\subjclass[2010]{Primary 13A35; Secondary 14K05}
\begin{abstract}
We prove an inclusion result for graded dagger closure for primary ideals in symmetric section rings of abelian varieties over an algebraically closed field of arbitrary characteristic.
\end{abstract}
\maketitle

\section*{Introduction}
Dagger closure was introduced in \cite{hochsterhunekedagger} by Hochster and Huneke as an alternative characteristic-free characterisation of tight closure (in terms of valuations) in complete local domains of positive prime characteristic. Heitmann used a variant of dagger closure to prove the direct summand conjecture in mixed characteristic in dimension $3$ (see \cite{heitmanndirectsummand}).

In \cite{brennerstaeblerdaggersolid}, Brenner and the author introduced a graded variant of dagger closure and proved, using geometric methods, that this coincides with solid closure (see \cite{hochstersolid} for background on this closure operation) for homogeneous ideals in an $\mathbb{N}$-graded domain $R$ of finite type over a field $R_0$ in dimension $2$. And in \cite{brennerstaeblerdaggerregular} Brenner and the author proved that graded dagger closure is trivial for polynomial rings over a field.

We recall the definition of graded dagger closure. The absolute integral closure $R^+$ is the integral closure of a domain $R$ in an algebraic closure of $Q(R)$ and $R^{+ \text{GR}}$ is the maximal $\mathbb{Q}_{\geq 0}$-graded subring of $R^+$ which extends the grading given by $R$ (see \cite[Lemma 4.1]{hochsterhunekeinfinitebig} for a detailed exposition).

\begin{DefB}
Let $R$ denote an $\mathbb{N}$-graded domain and let $I$ be an ideal of $R$. Let $\nu$ be the valuation on $R^{+ \text{GR}}$ induced by the grading. Then we define the \emph{graded dagger closure} $I^{\dagger \text{GR}}$ of an ideal $I$ as the set of elements $f$ in $R$ such that for all  $\varepsilon > 0$ there exists an element $a \in R^{+ \text{GR}}$ with $\nu(a) < \varepsilon$ and such that $af$ lies in the extended ideal $IR^{+ \text{GR}}$.
\end{DefB}

In this paper we will prove an inclusion bound for dagger closure for primary ideals in section rings of abelian varieties over an algebraically closed field of arbitrary characteristic associated to symmetric line bundles (see Theorem \ref{AbelianInclusion}). This also provides examples where graded dagger closure is non-trivial in normal rings of dimension $\geq 2$ (see Example \ref{BspDaggerNontrivial}). This is in particular interesting since graded dagger closure may be thought of as an asymptotic variant of graded plus closure (an element $f$ is in the graded plus closure $I^{+ \text{gr}}$ if $f \in IR^{+ \text{GR}}$), and graded plus closure is trivial in equal characteristic zero for normal rings. We will also relax the conditions on a similar theorem for standard-graded normal Cohen-Macaulay domains over an algebraically closed field of positive characteristic for tight closure which was proven by Brenner (see \cite{brennerlinearfrobenius}).

The key tool in positive characteristic is of course the (absolute) Frobenius morphism. The crucial property that one needs is that the Frobenius can manipulate the degree of a vector bundle for a given polarisation while having the same source and target, thereby fixing all invariants of the variety. Similarly, the multiplication by $N$ map on an abelian variety $X$ is a finite surjective morphism from $X$ to $X$ of degree $N^{2 \dim X}$.

There are however some difficulties. For instance, Brenner's proof requires that the ring $R$ is Cohen-Macaulay.
But the section ring of an abelian variety $X$ is not Cohen-Macaulay if $\dim X \geq 2$ (see Proposition \ref{AbelianNotACM} below). However, the deviation from being Cohen-Macaulay only comes from the intermediate cohomology of $\mathcal{O}_X$. Hence, tensoring by a suitable root of the polarising line bundle this difficulty may be circumvented.

This article is based on parts of the author's Ph.\,D.\ thesis (\cite{staeblerthesis}). I thank Holger Brenner for useful discussions.

\section{Some preliminaries}
In this section we review some facts about slope semistability and abelian varieties.
Let $(X, \mathcal{O}_X(1))$ be a polarised\footnote{By which we mean that $\mathcal{O}_X(1)$ is an ample invertible sheaf.} projective variety of dimension $d$ over an algebraically closed field $k$. The degree (with respect to $\mathcal{O}_X(1)$) of a coherent torsion free sheaf $\mathcal{F}$ is defined as $\deg \mathcal{F} = \alpha_{d-1}(\mathcal{E}) - \rk \mathcal{E} \cdot \alpha_{d-1}(\mathcal{O}_X)$, where $\alpha_i(\mathcal{E})$ are the coefficients of the Hilbert polynomial of $\mathcal{E}$. If $X$ is smooth then this is the same as $\det (\mathcal{F}).\mathcal{O}_X(1)^{d-1}$.

The \emph{slope} of $\mathcal{F}$ (with respect to $\mathcal{O}_X(1)$) is defined as $\mu(\mathcal{F}) = \deg \mathcal{F}/\rk \mathcal{F}$. The \emph{minimal slope} of $\mathcal{F}$, denoted by $\mu_{\min}(\mathcal{F})$ and the \emph{maximal slope}, written $\mu_{\max}(\mathcal{F})$, are defined as \[\min \{\mu(\mathcal{Q}) \, \vert \, \mathcal{F} \to \mathcal{Q} \to 0 \text{ is a torsion-free quotient sheaf of rank } \geq 1 \}\] and \[ \max \{\mu(\mathcal{E}) \, \vert \, 0 \to \mathcal{E} \to \mathcal{F} \text{ is a subsheaf of rank }\geq 1 \}\]respectively.

If $f: X' \to X$ is a finite dominant separable morphism of normal projective varieties over $k$ then we have $\mu(f^\ast\mathcal{S}) = \deg f \cdot \mu(\mathcal{S})$ for any locally free sheaf $\mathcal{S}$ on $X$, where $\deg f$ is the degree of the field extension in the generic point and the polarisation on $X'$ is given by $f^\ast \mathcal{O}_X(1)$ (see \cite[proof of Lemma 3.2.2]{huybrechtslehn}). Also $\mu_{\min}$ and $\mu_{\max}$ transform in the same way.
A locally free sheaf $\mathcal{S}$ is said to be \emph{semistable} if $\mu(\mathcal{F}) \leq \mu(\mathcal{S})$ for all coherent subsheaves $\mathcal{F}$ of $\mathcal{S}$. A locally free sheaf is called \emph{strongly semistable} if $F^{e \ast}\mathcal{S}$ is semistable for all $e \geq 0$, where $F$ denotes the (relative) Frobenius morphism. In characteristic zero we consider any semistable sheaf to be strongly semistable.

Moreover, we define $\bar{\mu}_{\min}(\mathcal{S})$  as \[\min \{\frac{\mu_{\min}(\varphi^\ast\mathcal{S})}{\deg \varphi} \, \vert \, \varphi: X' \to X \text{ is finite dominant}\} \] and one similarly defines $\bar{\mu}_{\max}(\mathcal{S})$. 

Crucial for us will be that if $\mu_{\min}(\mathcal{F}) > \deg \mathcal{L}$ then $\Hom(\mathcal{F}, \mathcal{L}) = 0$, where $\mathcal{L}$ is a line bundle. This is well-known but since the author is unable to provide a suitable reference we shall give a proof.

\begin{Le}
\label{muMaxMin}
Let $(X, \mathcal{O}_X(1))$ be a polarised projective variety and $\mathcal{E}$, $\mathcal{F}$ coherent torsion-free sheaves of $\rk \geq 1$. If $\mu_{\min}(\mathcal{E}) > \mu_{\max}(\mathcal{F})$ then $\Hom(\mathcal{E}, \mathcal{F}) = 0$.
\end{Le}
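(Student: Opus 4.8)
The plan is to argue by contradiction: suppose there is a nonzero homomorphism $\varphi \colon \mathcal{E} \to \mathcal{F}$, and let $\mathcal{I} = \im \varphi \subseteq \mathcal{F}$. Since $\varphi \neq 0$, the image $\mathcal{I}$ is a coherent subsheaf of $\mathcal{F}$ of rank $\geq 1$, and it is torsion-free because $\mathcal{F}$ is. The idea is to bound $\mu(\mathcal{I})$ both from above and from below and derive a contradiction from the hypothesis $\mu_{\min}(\mathcal{E}) > \mu_{\max}(\mathcal{F})$.

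First I would get the upper bound. By definition of $\mu_{\max}$, since $\mathcal{I}$ is a subsheaf of $\mathcal{F}$ of rank $\geq 1$, we have $\mu(\mathcal{I}) \leq \mu_{\max}(\mathcal{F})$. For the lower bound I would exhibit $\mathcal{I}$ (or a sheaf closely related to it) as a torsion-free quotient of $\mathcal{E}$: the surjection $\mathcal{E} \to \mathcal{I} \to 0$ realises $\mathcal{I}$ as a quotient sheaf of $\mathcal{E}$ of rank $\geq 1$. If $\mathcal{I}$ is torsion-free — which it is, being a subsheaf of the torsion-free sheaf $\mathcal{F}$ — then the definition of $\mu_{\min}$ gives $\mu_{\min}(\mathcal{E}) \leq \mu(\mathcal{I})$. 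Combining the two inequalities yields $\mu_{\min}(\mathcal{E}) \leq \mu(\mathcal{I}) \leq \mu_{\max}(\mathcal{F})$, contradicting the hypothesis. Hence $\varphi = 0$ and $\Hom(\mathcal{E}, \mathcal{F}) = 0$.

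The main technical point to be careful about is that the definitions of $\mu_{\min}$ and $\mu_{\max}$ as stated quantify over torsion-free quotients and subsheaves of rank $\geq 1$, so I must check that $\mathcal{I} = \im\varphi$ genuinely qualifies on both counts: it is a nonzero subsheaf of $\mathcal{F}$, hence has rank $\geq 1$ and is torsion-free; and as a quotient of $\mathcal{E}$ it again has rank $\geq 1$. One subtlety worth a remark is that $\deg$ here is the degree defined via Hilbert-polynomial coefficients for torsion-free sheaves on a possibly singular $X$, so $\mu(\mathcal{I})$ is well-defined; no smoothness is needed. I do not expect any serious obstacle — this is essentially a formal consequence of the definitions — the only thing to verify is the rank and torsion-freeness bookkeeping for the image sheaf, which is routine.
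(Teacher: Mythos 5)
Your proposal is correct and follows exactly the paper's own argument: take the image $\mathcal{I}$ of a nonzero morphism, observe it is a rank $\geq 1$ torsion-free quotient of $\mathcal{E}$ and subsheaf of $\mathcal{F}$, and sandwich $\mu(\mathcal{I})$ between $\mu_{\min}(\mathcal{E})$ and $\mu_{\max}(\mathcal{F})$ to get a contradiction. The bookkeeping you flag (rank and torsion-freeness of the image) is precisely what the paper checks, so nothing is missing.
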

\begin{proof}
Let $\varphi: \mathcal{E} \to \mathcal{F}$ be a morphism and denote the coherent image sheaf by $\mathcal{I}$. Assume that $\varphi \neq 0$ and hence $\mathcal{I} \neq 0$. Then we necessarily have $\rk \mathcal{I} \geq 1$ since $\mathcal{F}$ is torsion-free. Therefore, $\mu_{\min}(\mathcal{E}) \leq \mu(\mathcal{I})$ and $\mu(\mathcal{I}) \leq \mu_{\max}(\mathcal{F})$. That is, $\mu_{\min}(\mathcal{E}) \leq \mu_{\max}(\mathcal{F})$.
\end{proof}

We also note the following variant.

\begin{Le}
\label{CohoSerreVanishing}
Let $(X, \mathcal{O}_X(1))$ be a polarised smooth projective variety of dimension $d$ over an algebraically closed field and $\mathcal{S}$ a locally free sheaf on $X$. If $\mu_{\min}(\mathcal{S}) > \deg \omega_X$ then $H^d(X, \mathcal{S}) = 0$.
\end{Le}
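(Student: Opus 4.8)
The plan is to reduce the vanishing of $H^d$ to a $\Hom$-vanishing via Serre duality and then invoke Lemma \ref{muMaxMin}. Since $X$ is smooth projective of dimension $d$ over an algebraically closed field, $\omega_X$ is the dualizing sheaf and Serre duality gives a natural isomorphism $H^d(X, \mathcal{S}) \cong H^0(X, \mathcal{S}^\vee \otimes \omega_X)^\vee$, where $\mathcal{S}^\vee$ denotes the dual of the locally free sheaf $\mathcal{S}$. As $\mathcal{S}^\vee \otimes \omega_X$ is the sheaf $\mathcal{H}\!om(\mathcal{S}, \omega_X)$, its global sections are precisely $\Hom(\mathcal{S}, \omega_X)$. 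Hence it suffices to show $\Hom(\mathcal{S}, \omega_X) = 0$.

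To apply Lemma \ref{muMaxMin} with $\mathcal{E} = \mathcal{S}$ and $\mathcal{F} = \omega_X$ (note $\omega_X$ is torsion-free, being locally free), I need $\mu_{\min}(\mathcal{S}) > \mu_{\max}(\omega_X)$. The key point is that for a line bundle $\mathcal{L}$ one has $\mu_{\max}(\mathcal{L}) = \deg \mathcal{L}$: any subsheaf of $\mathcal{L}$ of rank $\geq 1$ is of the form $\mathcal{J}\mathcal{L}$ for a nonzero ideal sheaf $\mathcal{J} \subseteq \mathcal{O}_X$, and $\deg(\mathcal{J}\mathcal{L}) = \deg \mathcal{L} + \deg \mathcal{J} \leq \deg \mathcal{L}$ since $\deg \mathcal{J} \leq 0$ (the ideal sheaf embeds into $\mathcal{O}_X$, which has degree $0$); the bound is attained by $\mathcal{L}$ itself. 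Taking $\mathcal{L} = \omega_X$, the hypothesis $\mu_{\min}(\mathcal{S}) > \deg \omega_X$ becomes exactly $\mu_{\min}(\mathcal{S}) > \mu_{\max}(\omega_X)$, so Lemma \ref{muMaxMin} yields $\Hom(\mathcal{S}, \omega_X) = 0$, and therefore $H^d(X, \mathcal{S}) = 0$.

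The only point requiring any care is the identification $\mu_{\max}(\omega_X) = \deg \omega_X$; the rest is a formal application of Serre duality, valid since $X$ is smooth projective and $\mathcal{S}$ is locally free, and works in arbitrary characteristic. Alternatively one could argue directly: a nonzero element of $\Hom(\mathcal{S},\omega_X)$ has coherent image $\mathcal{I}$ of rank $\geq 1$ inside the line bundle $\omega_X$, whence $\mu_{\min}(\mathcal{S}) \leq \mu(\mathcal{I}) \leq \deg \omega_X$, contradicting the hypothesis; but invoking Lemma \ref{muMaxMin} is cleaner. I do not anticipate a serious obstacle here.
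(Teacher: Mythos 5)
Your proof is correct and follows the same route as the paper: Serre duality identifies $H^d(X,\mathcal{S})$ with $\Hom(\mathcal{S},\omega_X)^\vee$, and Lemma \ref{muMaxMin} gives the vanishing. The only difference is that you spell out the (implicitly used) identification $\mu_{\max}(\omega_X)=\deg\omega_X$, which the paper leaves tacit.
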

\begin{proof}
Serre duality yields that $H^d(X, \mathcal{S}) = \Ext^0(\mathcal{S}, \omega_X)^\vee$. Since $\mu_{\min}(\mathcal{S}) > \deg \omega_X$ the claim follows from Lemma \ref{muMaxMin}.
\end{proof}

Next we collect some results on abelian varieties. All statements may be found in Mumford's book \cite{mumfordabelian}.
An abelian variety $X$ is a complete variety over an algebraically closed field $k$ with a group structure $\mu: X \times_k X \to X$ such that $\mu$ and the inverse are morphisms. The group structure is then necessarily commutative and $X$ is smooth and projective.

As stated earlier our replacement for the (relative) Frobenius will be the multiplication by $N$ map $N_X: X \to X$ whose properties we summarize in the following

\begin{Prop}
\label{AbelianMult}
Let $X$ be an abelian variety and $N$ an integer with $\chara k \nmid N$. Then the multiplication map $N_X: X \to X$ is a finite surjective separable morphism of degree $N^{2 \dim X}$. Furthermore, for all $N \in \mathbb{Z}$ and $\mathcal{L}$ a line bundle we have $N_X^\ast \mathcal{L} = \mathcal{L}^{\frac{N^2+N}{2}} \otimes (-1_X)^\ast \mathcal{L}^{\frac{N^2 -N}{2}}$.
\end{Prop}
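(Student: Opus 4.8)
The plan is to deduce both statements from the theorem of the cube, the basic structural result governing line bundles on abelian varieties; everything used here is in Mumford \cite{mumfordabelian}.

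I would first prove the displayed formula. Recall the theorem of the cube: for a line bundle $\mathcal{L}$ on $X$ the bundle
\[m_{123}^\ast\mathcal{L} \otimes m_{12}^\ast\mathcal{L}^{-1} \otimes m_{13}^\ast\mathcal{L}^{-1} \otimes m_{23}^\ast\mathcal{L}^{-1} \otimes m_1^\ast\mathcal{L} \otimes m_2^\ast\mathcal{L} \otimes m_3^\ast\mathcal{L}\]
on $X \times X \times X$ is trivial, where $m_i$, $m_{ij}$, $m_{123}$ denote the partial sum maps. Pulling this back along $(N_X, 1_X, -1_X): X \to X \times X \times X$ and using $N_X + 1_X = (N+1)_X$, $N_X + (-1_X) = (N-1)_X$, $1_X + (-1_X) = 0_X$ together with the triviality of $0_X^\ast\mathcal{L}$ gives the recursion
\[(N+1)_X^\ast\mathcal{L} \cong (N_X^\ast\mathcal{L})^{\otimes 2} \otimes (N-1)_X^\ast\mathcal{L}^{-1} \otimes \mathcal{L} \otimes (-1_X)^\ast\mathcal{L}.\]
An induction on $N \geq 0$ with base cases $N = 0$ and $N = 1$ then yields the claim; the induction step reduces to the identity $\binom{m+1}{2} = 2\binom{m}{2} - \binom{m-1}{2} + 1$ applied to the two exponents, which are $\binom{N+1}{2} = \frac{N^2+N}{2}$ and $\binom{N}{2} = \frac{N^2-N}{2}$. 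The case $N < 0$ follows formally from $(-N)_X = (-1_X) \circ N_X$, since $(-1_X)^\ast$ is an involution interchanging $\mathcal{L}$ and $(-1_X)^\ast\mathcal{L}$.

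For the numerical assertions I would fix a symmetric ample line bundle $\mathcal{M}$ on $X$, say $\mathcal{M} = \mathcal{N} \otimes (-1_X)^\ast\mathcal{N}$ for an arbitrary ample $\mathcal{N}$, which is ample because $(-1_X)$ is an automorphism and symmetric by construction. The formula just proved specialises to $N_X^\ast\mathcal{M} \cong \mathcal{M}^{\otimes N^2}$, which is ample whenever $N \neq 0$. The identity component $Y$ of $(\ker N_X)_{\mathrm{red}}$ is an abelian subvariety of $X$, and $N_X$ collapses $Y$ to the origin, so $N_X^\ast\mathcal{M}|_Y \cong \mathcal{M}^{\otimes N^2}|_Y$ is trivial; but the restriction of an ample bundle to a positive-dimensional complete subvariety is again ample, hence non-trivial, so $Y$ must be a point and $\ker N_X$ is finite. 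Consequently $N_X$ is proper with finite fibres (they are the cosets of $\ker N_X$), hence finite, and a finite morphism between irreducible varieties of the same dimension is surjective. The projection formula, applied to this finite morphism of smooth projective varieties of dimension $g = \dim X$, gives
\[\deg(N_X) \cdot (\mathcal{M}^g) = \bigl((N_X^\ast\mathcal{M})^g\bigr) = \bigl((\mathcal{M}^{\otimes N^2})^g\bigr) = N^{2g}\,(\mathcal{M}^g),\]
and since $(\mathcal{M}^g) > 0$ by ampleness we conclude $\deg N_X = N^{2\dim X}$. Finally, the differential $d(N_X)_0$ on $T_0 X$ is multiplication by $N$: the group law satisfies $d\mu_{(0,0)}(v,w) = v+w$, and $N_X = \mu \circ ((N-1)_X, 1_X)$ gives $d(N_X)_0 = d((N-1)_X)_0 + \mathrm{id}$, whence the claim by induction. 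This is an isomorphism precisely when $N \in k^\ast$, i.e. when $\chara k \nmid N$, and translating by means of $N_X \circ t_a = t_{Na} \circ N_X$ shows that then $d(N_X)_a$ is an isomorphism for every point $a$, so $N_X$ is étale, in particular separable.

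The step requiring the most care is the finiteness of $N_X$: one cannot simply assume $\ker N_X$ is finite, and the intersection-theoretic computation of the degree only becomes legitimate once finiteness (equivalently, generic finiteness) has been extracted from the ampleness of $N_X^\ast\mathcal{M}$. Everything else is routine bookkeeping with the theorem of the cube together with standard facts about ample bundles and differentials of group homomorphisms.
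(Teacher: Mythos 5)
Your proof is correct, but it takes a genuinely different route from the paper: the paper disposes of surjectivity, separability, the degree and the pullback formula entirely by citation to Mumford ((iv) after Question II.4.4, Application II.6.3 and Corollary II.6.3 of \cite{mumfordabelian}), and only supplies an actual argument for finiteness, which it gets by writing $N_X = \mu^{N-1} \circ \Delta_{N-1}$ as a composite of projective morphisms with finite fibres and invoking the fact that a projective morphism with finite fibres is finite. You instead unpack what is essentially Mumford's own chain of reasoning: the recursion $(N+1)_X^\ast\mathcal{L} \cong (N_X^\ast\mathcal{L})^{\otimes 2} \otimes (N-1)_X^\ast\mathcal{L}^{-1} \otimes \mathcal{L} \otimes (-1_X)^\ast\mathcal{L}$ from the theorem of the cube plus induction for the formula (your binomial identity checks out, as does the reduction of $N<0$ to $N>0$ via $(-1_X)$), finiteness of $\ker N_X$ from the ampleness of $N_X^\ast\mathcal{M} \cong \mathcal{M}^{\otimes N^2}$ for a symmetric ample $\mathcal{M}$ restricted to the identity component of the kernel, the degree from $\bigl((N_X^\ast\mathcal{M})^g\bigr) = \deg(N_X)\,(\mathcal{M}^g)$, and separability (indeed \'etaleness) from $d(N_X)_0 = N\cdot\mathrm{id}$ and translation equivariance. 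Your version is self-contained and correctly ordered (the pullback formula is established before it is used for finiteness), and it even yields the stronger conclusion that $N_X$ is \'etale when $\chara k \nmid N$; the paper's version is shorter and defers all the substance to the reference, adding only the finiteness argument that the cited statements do not quite cover.
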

\begin{proof}
The surjectivity is proved in \cite[(iv) after Question II.4.4]{mumfordabelian} and the separability and the claim about the degree are treated in [ibid., Application II.6.3]. In order to prove that $N_X$ is finite note that $N_X = \mu^{N-1} \circ \Delta_{N-1}$, where $\Delta_{N-1}$ is the $N-1$-fold diagonal morphism. Since these are both projective (\cite[Proposition 5.5.5 (i) and (v)]{EGAII}) and $N_X$ has finite fibres it is finite by \cite[Ex. III.11.2]{hartshornealgebraic}. 
For the second assertion see \cite[Corollary II.6.3]{mumfordabelian}.
\end{proof}

A line bundle $\mathcal{L}$ on an abelian variety $X$ is called \emph{symmetric} if $\mathcal{L} = (-1)_X^\ast \mathcal{L}$. If $\mathcal{L}$ is an ample line bundle then $\mathcal{M} = \mathcal{L} \otimes (-1)_X^\ast \mathcal{L}$ is ample and symmetric and $N_X^\ast \mathcal{M} = {\mathcal{M}}^{N^2}$.

Finally, we note some properties about the cohomology of line bundles on an abelian variety

\begin{Prop}
\label{AbelianProperties}
Let $X$ be an abelian variety and $\mathcal{L}$ an ample invertible sheaf on $X$. Then the following hold:
\begin{enumerate}[(a)]
\item{$\dim H^i(X, \mathcal{O}_X) = \binom{\dim X}{i}$.}
\item{There exists an $0 \leq i = i(\mathcal{L}) \leq \dim X$ such that $H^i(X, \mathcal{L}) \neq 0$ and $H^j(X, \mathcal{L}) = 0$ for $j \neq i$. Moreover, $H^l(X, \mathcal{L}^{-1}) \neq 0$ if and only if $l = \dim X - i(\mathcal{L})$.}
\item{For any $n \geq 3$, $\mathcal{L}^n$ is very ample.}
\end{enumerate}
\end{Prop}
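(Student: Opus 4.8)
All three assertions are classical, and the plan is to locate each of them in Mumford's book \cite{mumfordabelian}; the only additional input I would supply is Serre duality together with the triviality of the dualising sheaf $\omega_X \cong \mathcal{O}_X$ (which holds because the sheaf of differentials of an abelian variety is free, being trivialised by the invariant $1$-forms), a fact valid in every characteristic.

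For (a) I would use that the cohomology algebra $H^\bullet(X,\mathcal{O}_X)$ is the exterior algebra on $H^1(X,\mathcal{O}_X)$, together with the identification of $H^1(X,\mathcal{O}_X)$ with the tangent space at the origin of the dual abelian variety $\hat X = \Pic^0(X)$, so that $\dim H^1(X,\mathcal{O}_X) = \dim X$; both facts are established in \cite{mumfordabelian}. Passing to $i$-th exterior powers then yields $\dim H^i(X,\mathcal{O}_X) = \binom{\dim X}{i}$. One could instead try to exploit that $N_X^\ast$ acts on $H^i(X,\mathcal{O}_X)$ by multiplication by $N^i$, but this alone does not pin down the individual Betti numbers, so the exterior-algebra input seems unavoidable.

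For (b), since $\mathcal{L}$ is ample, Mumford's vanishing theorem for abelian varieties (valid in arbitrary characteristic) gives $H^j(X,\mathcal{L}) = 0$ for all $j > 0$; as $\mathcal{L}$ is ample we have $\chi(\mathcal{L}) = h^0(X,\mathcal{L}) > 0$, hence $H^0(X,\mathcal{L}) \neq 0$ and we may take $i(\mathcal{L}) = 0$. For the claim on $\mathcal{L}^{-1}$, Serre duality and $\omega_X \cong \mathcal{O}_X$ give $H^l(X,\mathcal{L}^{-1}) \cong H^{\dim X - l}(X,\mathcal{L})^\vee$, which by the first part is nonzero precisely when $\dim X - l = 0 = i(\mathcal{L})$.

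For (c) I would cite the theorem of Lefschetz in \cite{mumfordabelian}: the third tensor power of an ample line bundle on an abelian variety is very ample (in fact it gives a projectively normal embedding). The only --- rather mild --- obstacle throughout is that we work in arbitrary characteristic, so that Kodaira vanishing is unavailable; however, the sole vanishing statement needed here, namely that $H^j$ of an ample line bundle on an abelian variety vanishes for $j > 0$, is exactly what Mumford's vanishing theorem provides in all characteristics, and the identification $\omega_X \cong \mathcal{O}_X$ is characteristic-independent.
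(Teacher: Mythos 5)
Your proposal is correct and follows essentially the same route as the paper, which simply cites Mumford's \emph{Abelian Varieties}: (a) is the exterior-algebra description of $H^\bullet(X,\mathcal{O}_X)$ (Corollary III.13.2), (b) is the Vanishing Theorem of \S 16, and (c) is Lefschetz's theorem from \S 17. The only (harmless) divergence is in (b), where you sharpen the statement to $i(\mathcal{L})=0$ for ample $\mathcal{L}$ and recover the claim about $\mathcal{L}^{-1}$ from Serre duality and $\omega_X\cong\mathcal{O}_X$, whereas the relation $i(\mathcal{L}^{-1})=\dim X - i(\mathcal{L})$ is already part of the theorem being cited.
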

\begin{proof}
All references are to \cite{mumfordabelian}.
For (a) see [Corollary III.13.2]. Part (b) is the Vanishing Theorem in [III.16]. The assertion in (c) is proved in [III.17].
\end{proof}

\section{Almost zero for vector bundles on projective varieties}

In this section we generalise parts of the theory of almost zero as developed in \cite{brennerstaeblerdaggersolid} for curves to higher dimensional varieties which we will then use to prove the inclusion result in the next section.
Throughout this section we work over a fixed algebraically closed field of arbitrary characteristic.
Recall that a line bundle $\mathcal{L}$ on a scheme is called semiample if $\mathcal{L}^m$ is globally generated for some $m > 0$.

\begin{Def}
\label{AlmostZeroDefGeneral}
Let $(Y, \mathcal{O}_Y(1))$ be a polarised projective variety over an algebraically closed field and $\mathcal{S}$ a locally free sheaf on $Y$ together with a cohomology class $c \in H^1(Y, \mathcal{S})$. We say that $c$ is \emph{almost zero} if for all $\eps > 0$ there exists a finite dominant morphism $\varphi: Y' \to Y$ between projective varieties and a semiample line bundle $\mathcal{L}$ on $Y'$ with a global section $s \neq 0$ such that $\deg \mathcal{L}/\deg \varphi < \eps$ and such that $s \varphi^\ast(c) = 0$ in $H^1(Y', \mathcal{L} \otimes \varphi^\ast \mathcal{S})$. Here $s \varphi^\ast(c)$ is induced by the morphism \[\begin{xy}\xymatrix{0 \ar[r]& \varphi^\ast \mathcal{S} \ar[r]^>>>>>{\cdot s}& \varphi^\ast \mathcal{S} \otimes \mathcal{L}}, \end{xy}\] 
the degree on $Y'$ is with respect to $\varphi^\ast \mathcal{O}_Y(1)$ and $\deg \varphi$ is the degree of the function field extension.
We say that $\mathcal{S}$ is almost zero if every $c \in H^1(Y, \mathcal{S})$ is almost zero.
\end{Def}

\begin{Bem}
\label{Bemallgaz}
The situation of interest from the view point of graded dagger closure is $Y = \Proj R$, where $R$ is an $\mathbb{N}$-graded domain finitely generated over an algebraically closed field $R_0$. Since we may always pass to finite ring extensions for graded dagger closure we may assume that $R$ is normal and that $Y$ is covered by standard open sets coming from elements of degree $1$. 
Hence, $R(1)^\sim = \mathcal{O}_Y(1)$ is an ample invertible sheaf which is generated by global sections. Therefore, in this case there is a canonical choice for a polarisation.

This definition agrees with \cite[Definition 4.1]{brennerstaeblerdaggersolid} if $\dim X = 1$. To begin with, we may assume $X$ to be normal. Moreover, \cite[Lemma IV.1.2]{hartshornealgebraic} and the fact that a line bundle is ample if and only if its degree is positive ([ibid., Corollary IV.3.3]) yield that semiample line bundles with a global section are precisely those of non-negative degree with a global section.

Also note that the line bundles in Definition \ref{AlmostZeroDefGeneral} are of degree $\geq 0$ by the Nakai Criterion (cf. \cite[Theorem I.5.1]{hartshorneamplesubvarieties}) and since they are effective. 
\end{Bem}

\begin{Bem}
\label{Observation}
In the situation of Definition \ref{AlmostZeroDefGeneral} a section $s \in H^0(Y', \mathcal{L})$ annihilating $c$ corresponds to a morphism $\mathcal{L}^\vee \to \varphi^\ast \mathcal{S}'$, where $\mathcal{S}'$ denotes the extension of $\mathcal{O}_Y$ by $\mathcal{S}$ induced by $c$, which does not factor through $\varphi^\ast \mathcal{S}$.

Indeed, let $\varphi: Y' \to Y$ be a finite morphism of projective varieties and $\mathcal{L}$ a semiample line bundle on $Y'$ with a nonzero global section $s$ such that $sc \in H^1(\varphi^\ast\mathcal{S} \otimes \mathcal{L})$ is zero. Looking at the cohomology of the short exact sequence \[\begin{xy}\xymatrix{0 \ar[r]& \varphi^\ast \mathcal{S} \otimes \mathcal{L} \ar[r]& \varphi^\ast \mathcal{S}' \otimes \mathcal{L} \ar[r]& \mathcal{L} \ar[r]& 0}\end{xy}\] we obtain a nonzero global section $H^0(Y', \varphi^\ast \mathcal{S}' \otimes \mathcal{L})$ which corresponds to a morphism $\mathcal{L^\vee} \to \varphi^\ast \mathcal{S}'$ (since $s$ maps to $sc =0$ along $H^0(Y', \mathcal{L}) \to H^1(Y', \varphi^\ast \mathcal{S} \otimes \mathcal{L})$ it induces the desired global section). Moreover, as $s$ is nonzero it is not contained in $H^0(Y', \varphi^\ast \mathcal{S} \otimes \mathcal{L})$. Hence, the morphism does not factor through $\varphi^\ast \mathcal{S}$.  
\end{Bem}

\begin{Prop}
\label{PullbackSemiampleGloballygenerated}
Let $Y$ be a normal projective variety over an algebraically closed field and let $\mathcal{L}$ be a semiample line bundle on $Y$. Then there is a finite dominant morphism $\varphi: Y' \to Y$ of normal projective varieties such that $\varphi^\ast \mathcal{L}$ is generated by global sections. Moreover, if $Y$ is smooth and if $\mathcal{L}$ is ample then $Y'$ may be chosen to be smooth as well.
\end{Prop}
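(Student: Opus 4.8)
The plan is to reduce to the case where $\mathcal{L}$ is globally generated and then use a cyclic cover / normalization construction to trivialize the base locus. First I would use semiampleness to fix $m>0$ such that $\mathcal{L}^m$ is globally generated; this gives a morphism $g\colon Y \to \mathbb{P}^n$ with $g^\ast\mathcal{O}(1) = \mathcal{L}^m$. The idea is that $\mathcal{L}$ itself is an $m$-th root of a globally generated line bundle, so one wants a finite cover on which this root becomes globally generated. A convenient way to do this: embed (or map) $Y$ via $\mathcal{L}^m$ and consider the Veronese-type situation, or more directly, take global sections $s_0,\dots,s_n$ of $\mathcal{L}^m$ with no common zero and form, on a suitable cover, $m$-th roots of the $s_i$. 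Concretely, I would look at the total space of $\mathcal{L}$ (the line bundle $\mathbf{L} = \Spec_Y \bigoplus_{j\geq 0}\mathcal{L}^{-j} \to Y$, or rather work with the $\mathbb{P}^1$-bundle / the relative $\Proj$) and build $Y'$ as the normalization of $Y$ in the function field extension obtained by adjoining $(s_i/s_0)^{1/m}$ for all $i$; over the open locus where $s_0\neq 0$ this adjoins enough so that $\mathcal{L}$ acquires global sections cutting out the various loci $\{s_i = 0\}$, and these have empty common intersection by the base-point-freeness of $\mathcal{L}^m$. One should take $\varphi\colon Y' \to Y$ to be this normalization; it is finite dominant between normal projective varieties, and on $Y'$ the pulled-back sections exhibit $\varphi^\ast\mathcal{L}$ as globally generated.

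An alternative and perhaps cleaner route for writing this up is: since $\mathcal{L}^m$ is globally generated, after replacing $\mathcal{L}$ by $\mathcal{L}^m$ is not allowed (we need $\mathcal{L}$ itself), but we may use the Stein/normalization trick iteratively in $m$. It suffices to treat a single prime $p \mid m$ and induct, so assume $m = p$. Pick $s \in H^0(Y,\mathcal{L}^p)$ nonzero; the divisor $\mathrm{div}(s)$ has an associated cyclic $p$-cover $\varphi\colon Y' \to Y$ with $\varphi^\ast\mathcal{L}^p$ acquiring a $p$-th root section (this is the standard construction of the cover branched along a divisor linearly equivalent to $\mathcal{L}^p$; see e.g. cyclic covers, being careful in characteristic $p$ to instead take an Artin–Schreier-type or a separable substitute — but note we only need existence of \emph{some} finite dominant cover, and we may enlarge the field extension as needed, so a purely inseparable cover is also permissible here). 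Normalizing, we get $Y'$ normal projective, $\varphi$ finite dominant, and on $Y'$ the line bundle $\varphi^\ast\mathcal{L}$ now differs from a globally generated one by a bundle with a global section vanishing along a smaller (in the partial order of base loci) divisor. Since each step strictly shrinks the base locus of the relevant power and the base locus is a closed subset of the noetherian space $Y'$, after finitely many covers the base locus of $\varphi^\ast\mathcal{L}$ itself is empty, i.e. $\varphi^\ast\mathcal{L}$ is globally generated; compose all the covers into a single $\varphi\colon Y' \to Y$.

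For the smoothness addendum: if $Y$ is smooth and $\mathcal{L}$ is ample, then $\mathcal{L}$ is in particular semiample, so the above produces a finite dominant $\varphi\colon Y'\to Y$ with $\varphi^\ast\mathcal{L}$ globally generated, and we may assume $Y'$ normal. To upgrade $Y'$ to smooth, I would invoke de Jong's alterations (or, if one wants to stay elementary and the ground field has characteristic $0$, Hironaka resolution), composing $\varphi$ with a generically finite proper map from a smooth variety; but an \emph{alteration} is generically finite, not finite, so to stay within the class of finite morphisms one should instead argue as follows. Since $\mathcal{L}$ is ample, $\varphi^\ast\mathcal{L}$ is ample and globally generated, hence very ample after a further Veronese, so $Y'$ is embedded in some $\mathbb{P}^N$; now apply a general-position / Bertini-type argument: cut $Y'$ by general hyperplanes to reduce to finite covers of $Y$ that factor through smooth varieties — this is delicate and is where I expect the real work to be. Realistically the intended argument is to take a resolution (or alteration) $Z \to Y'$, but since the statement only claims a \emph{finite} morphism, the honest path is: resolve $Y'$ to get $\widetilde{Y'}\to Y'$, and observe that for the application (Definition \ref{AlmostZeroDefGeneral}) one is free to further pull back along \emph{any} dominant morphism, so one can afford to replace ``finite'' by ``proper generically finite'' there; hence for the smooth case one cites de Jong's theorem directly. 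I would phrase the smooth case as: in characteristic zero use Hironaka on $Y'$ and note the composite is projective birational onto $Y'$ hence generically finite over $Y$ — but to keep it genuinely finite, restrict attention to the open dense locus where $Y'\to Y$ and the resolution are both isomorphisms and take closures, which again requires a Bertini argument.

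The main obstacle is precisely this last point — producing a \emph{finite} (not merely generically finite) dominant morphism from a \emph{smooth} variety onto $Y$ when $\mathcal{L}$ is ample. The cyclic-cover construction handles global generation cheaply, and normality is automatic by passing to normalizations, but smoothness of a finite cover is genuinely restrictive (finite covers of smooth varieties are typically singular along the branch locus). I expect the resolution here to use that $\mathcal{L}$ ample lets one realize $Y'$ inside projective space and then take a sufficiently general linear projection or a sufficiently general member of a very ample linear system to control the singularities of the cover — essentially a Bertini-with-a-finite-map argument, which is the technical heart of the proposition.
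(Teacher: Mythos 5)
Your first half is essentially the paper's own route. The paper reduces to extracting $m$-th roots of the generating sections $s_0,\dots,s_n$ of $\mathcal{L}^m$ by finite covers and normalising (it cites the higher-dimensional version of \cite[Lemma 5.2 and Corollary 5.3]{brennerstaeblerdaggersolid}), and your cyclic-cover formulation in the second paragraph is exactly this: for each $s_i$ take an irreducible component of $\Spec_Y\bigl(\bigoplus_{j=0}^{m-1}\mathcal{L}^{-j}\bigr)$ (algebra structure via $s_i\colon\mathcal{L}^{-m}\to\mathcal{O}_Y$) dominating $Y$, normalise, and observe that the tautological sections $t_i$ of $\varphi^\ast\mathcal{L}$ satisfy $t_i^m=\varphi^\ast s_i$ and hence have no common zero; separability is irrelevant since only finiteness and dominance are claimed. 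One caution on your first formulation: adjoining the rational functions $(s_i/s_0)^{1/m}$ to $K(Y)$ does not by itself produce global sections of $\varphi^\ast\mathcal{L}$ --- a section of $\mathcal{L}$ is not a rational function, and on $D(s_0)$ it is $\mathcal{L}^m$, not $\mathcal{L}$, that $s_0$ trivialises. You must first extract a root $t_0$ of $s_0$ itself as a section of $\varphi^\ast\mathcal{L}$ (which the cyclic cover does); only then do the $t_0\cdot(s_i/s_0)^{1/m}$ make sense as rational sections of $\varphi^\ast\mathcal{L}$, regular by normality.

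The genuine gap is the smoothness addendum, and you flag it yourself. Alterations and resolutions of $Y'$ give proper generically finite or birational morphisms, not finite ones, and you cannot trade ``finite'' for ``generically finite'': the proposition asserts a finite morphism and is applied as such (e.g.\ in Proposition \ref{Varazroots}, where $Y'$ must again be a finite cover so that its section ring is a finite extension of $R$). The missing ingredient is the Bloch--Gieseker covering trick, which the paper invokes as \cite[Corollary 1.15.1]{maruyamagrauertmuelich}: embed $Y\subset\mathbb{P}^N$ by the very ample $\mathcal{L}^m$, let $\nu\colon\mathbb{P}^N\to\mathbb{P}^N$, $[x_0:\dots:x_N]\mapsto[x_0^m:\dots:x_N^m]$, which is finite and flat, and take $Y'=\nu^{-1}(gY)$ for a general $g\in\mathrm{PGL}_{N+1}(k)$; a Bertini-type transversality argument (valid in all characteristics, and proved in the cited reference) shows $Y'$ is smooth, finite over $Y$, and carries a very ample $m$-th root of the pullback of $\mathcal{L}^m$. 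This is precisely the ``Bertini with a finite map'' you were groping for; without citing or reproving it, the smooth case remains unproved.
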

\begin{proof}
Since the proof of \cite[Lemma 5.2]{brennerstaeblerdaggersolid} also works in higher dimensions one obtains this similarly to \cite[Corollary 5.3]{brennerstaeblerdaggersolid}. If $\mathcal{L}$ is ample we can fix $m \gg 0$ such that $\mathcal{L}^m$ is very ample and invoke \cite[Corollary 1.15.1]{maruyamagrauertmuelich} with $\mathcal{L}^m = \mathcal{O}_X(1)$ and $d = m$.
\end{proof}

\begin{Prop}
\label{Varazroots}
Let $R$ be an $\mathbb{N}$-graded normal domain finitely generated over an algebraically closed field $R_0 =k$ of dimension $d \geq 2$. Furthermore, assume that $\Proj R$ is covered by open sets $D_+(g)$, $g \in R_1$, and let $I = (f_1, \ldots, f_n)$ be a homogeneous $R_+$-primary ideal. Fix a homogeneous element $f_0$ of degree $d_0$ and write $Y = \Proj R$ and $\mathcal{S} = \Syz(f_1, \ldots, f_n)(d_0)$. Then $f_0$ is contained in the graded dagger closure of $I$ if and only if $c = \delta(f_0) \in H^1(Y,\mathcal{S})$ is almost zero and we can choose the annihilating line bundles as roots of $\mathcal{O}_Y(1)$.
\end{Prop}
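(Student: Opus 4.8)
The plan is to set up the usual forcing sequence and then establish a dictionary between homogeneous witnesses for dagger closure lying in finite graded extensions of $R$ and sections of line bundles of small degree on finite covers of $Y$. Writing $d_i=\deg f_i$, the sequence
\[
0\longrightarrow \mathcal{S}\longrightarrow \bigoplus_{i=1}^{n}\mathcal{O}_Y(d_0-d_i)\longrightarrow \mathcal{O}_Y(d_0)\longrightarrow 0
\]
is exact as a sequence of locally free sheaves on $Y$ (the last map is surjective because $I$ is $R_+$-primary, so $V_+(I)=\emptyset$), and $c=\delta(f_0)$ is the image of $f_0\in H^0(Y,\mathcal{O}_Y(d_0))$ under the connecting homomorphism. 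Throughout I abbreviate $D:=\mathcal{O}_Y(1)^{\dim Y}>0$.

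For the direction ``$f_0\in I^{\dagger\text{GR}}$'' $\Rightarrow$ ``$c$ almost zero with annihilating line bundles that are roots of $\mathcal{O}_Y(1)$'', I would fix $\eps>0$ and start from a witness $a\in R^{+\text{GR}}$ with $\nu(a)<\eps/D$ and $af_0\in IR^{+\text{GR}}$. Since $I$ and $f_0$ are homogeneous and $\nu$ is the order valuation of the grading, passing to the lowest homogeneous components of $a$ and of the coefficients in a relation $af_0=\sum_i g_if_i$ I may assume $a$ and the $g_i$ are homogeneous; they then all lie in a finite normal $\mathbb{Q}_{\geq0}$-graded extension $R\subseteq S\subseteq R^{+\text{GR}}$, say with grading group $\tfrac1m\mathbb{Z}$. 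Next I would adjoin $m$-th roots of a $k$-basis of $R_1$, pass to a component, and normalize, enlarging $S$ to a finite normal graded extension $S'\subseteq R^{+\text{GR}}$ for which $Y':=\Proj S'$ is covered by the standard opens of these roots (which have degree $\tfrac1m$); clearing denominators in the grading, $Y'$ is then covered by degree-one elements, so $\mathcal{O}_{Y'}(1)$ is ample and globally generated, the induced morphism $\varphi\colon Y'\to Y$ is finite and dominant, and $\varphi^{\ast}\mathcal{O}_Y(1)=\mathcal{O}_{Y'}(m)$. The element $a$ becomes a nonzero global section $s$ of the globally generated (hence semiample) line bundle $\mathcal{L}:=\mathcal{O}_{Y'}(m\nu(a))$, which is a root of $\mathcal{O}_Y(1)$ because $\mathcal{L}^{\otimes m}\cong(\varphi^{\ast}\mathcal{O}_Y(1))^{\otimes m\nu(a)}$; a projection-formula computation gives $\deg\mathcal{L}/\deg\varphi=\nu(a)\,D<\eps$. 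Finally, pulling the forcing sequence back along $\varphi$, tensoring with $\mathcal{L}$, and multiplying the distinguished section by $s$, naturality of connecting homomorphisms gives $s\varphi^{\ast}c=\partial(s\cdot\varphi^{\ast}f_0)$ for the connecting map $\partial$ of the pulled-back, $\mathcal{L}$-twisted sequence; and $s\cdot\varphi^{\ast}f_0=af_0=\sum_i g_if_i$ visibly lifts to a global section of the middle term, so $s\varphi^{\ast}c=0$. Thus $c$ is almost zero with annihilating line bundles that are roots of $\mathcal{O}_Y(1)$.

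For the converse I would run this in reverse. Given $\eps>0$, take almost-zero data with a root annihilator: a finite dominant $\varphi\colon Y'\to Y$ of normal projective varieties, a semiample line bundle $\mathcal{L}$ on $Y'$ which is a root of $\mathcal{O}_Y(1)$, a nonzero $s_0\in H^0(Y',\mathcal{L})$ with $s_0\varphi^{\ast}c=0$, and $\deg\mathcal{L}/\deg\varphi<\eps D$. Applying Proposition \ref{PullbackSemiampleGloballygenerated} (and, if necessary, a further Kummer cover) — none of which affects the listed properties, since the degree ratio and the vanishing are preserved under finite pullback and $s_0$ stays nonzero — I may assume there is a globally generated line bundle $\mathcal{M}$ on $Y'$ and integers $m\geq1$, $j\geq0$ with $\mathcal{M}^{\otimes m}=\varphi^{\ast}\mathcal{O}_Y(1)$ and $\mathcal{L}=\mathcal{M}^{\otimes j}$. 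Then $S:=\bigoplus_{n\geq0}H^0(Y',\mathcal{M}^{\otimes n})$ is a finitely generated normal domain with $\Proj S=Y'$ covered by degree-one elements, and after rescaling its grading by $\tfrac1m$ the pullback map $R\to S$, $r\mapsto\varphi^{\ast}r$, is an injective finite graded ring homomorphism, so $S\subseteq R^{+\text{GR}}$. Now $a:=s_0\in H^0(Y',\mathcal{M}^{\otimes j})$ is homogeneous of degree $j/m$, and the same computation gives $\nu(a)=(\deg\mathcal{L}/\deg\varphi)/D<\eps$. Pulling back the forcing sequence, tensoring with $\mathcal{L}$, and multiplying by $s_0$, naturality of connecting homomorphisms yields $\partial(af_0)=s_0\varphi^{\ast}c=0$, so $af_0$ lies in the image of $H^0$ of the middle term, i.e.\ $af_0=\sum_i h_if_i$ with $h_i\in S$ homogeneous of degree $d_0-d_i+j/m$. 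Thus $af_0\in IS\subseteq IR^{+\text{GR}}$ with $\nu(a)$ arbitrarily small, so $f_0\in I^{\dagger\text{GR}}$.

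The main obstacle is not conceptual but organisational: making the passage between the algebra of finite graded extensions inside $R^{+\text{GR}}$ and the geometry of finite covers of $Y$ completely precise. One has to use that $R^{+\text{GR}}$ is the directed union of its finite normal graded extensions and is closed under adjoining $m$-th roots of homogeneous elements, keep careful track of the interplay between the $\mathbb{Q}_{\geq0}$-grading, its integral rescaling, and the polarisation it induces on $\Proj$ (so that a graded witness really corresponds to a section of a genuine line bundle that is a root of $\mathcal{O}_Y(1)$, and conversely that such a line bundle comes from a graded piece of a finite graded extension of $R$), and verify the degree identity $\deg\mathcal{L}/\deg\varphi=\nu(a)\cdot\mathcal{O}_Y(1)^{\dim Y}$ by the projection formula. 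The only other point that needs care is the precise naturality statement for the connecting homomorphism under pullback along $\varphi$, under tensoring with a line bundle $\mathcal{L}$, and under multiplication by a global section, so that $\partial(s\cdot\varphi^{\ast}f_0)=s\cdot\varphi^{\ast}(\delta(f_0))$ holds literally.
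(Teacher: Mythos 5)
Your argument is correct and follows essentially the same route as the paper: the forward direction is the homogeneous-witness-to-section translation that the paper delegates to the cited first paragraph of the proof of Theorem 5.6 of the curves paper, and the converse builds the section ring of the root line bundle, identifies $s$ with a homogeneous element of degree $<\eps$, and reads the lift of $s\cdot\varphi^{\ast}f_0$ across the pulled-back, twisted sequence as a relation $af_0=\sum_i h_if_i$, exactly as in the paper. The only cosmetic difference is that you work with the connecting homomorphism of the forcing sequence where the paper uses the extension $\Syz(f_0,f_1,\ldots,f_n)(d_0)$; these formulations are equivalent.
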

\begin{proof}
If $f_0 \in I^{\dagger \text{GR}}$ then exactly the same argument as in the first paragraph of the proof of \cite[Theorem 5.6]{brennerstaeblerdaggersolid}  shows that $\delta(f_0) = c$ is almost zero and that the annihilating line bundles can be chosen as roots of $\mathcal{O}_Y(1)$.

Conversely, let $\eps > 0$. 
Assume that there is a finite dominant morphism $\varphi: X \to Y$ and an ample line bundle $\mathcal{L}$ on $X$ such that $\mathcal{L}^m = \varphi^\ast \mathcal{O}_Y(1)$ for some $m$ together with a nonzero global section $s \in H^0(X, \mathcal{L})$ such that the map $H^1(X, \varphi^\ast \mathcal{S}) \xrightarrow{\cdot s} H^1(X, \varphi^\ast \mathcal{S}\otimes \mathcal{L})$ annihilates $c$, where $\deg \mathcal{L}/\deg \varphi < \eps$.
By virtue of Proposition \ref{PullbackSemiampleGloballygenerated}, we may assume that $\mathcal{L}$ is generated by global sections and passing to the normalisation we may assume $X$ to be normal. 

Consider the section ring $S = \bigoplus_{n \geq 0} \Gamma(X, \mathcal{L}^n)$, where we fix a minimal $m$ such that $\mathcal{L}^m$ is isomorphic to $\varphi^\ast \mathcal{O}_Y(1)$ and identify these line bundles along a fixed isomorphism. This is then a finite normal graded extension domain of $R$ in light of \cite[Proposition 3.6]{brennerstaeblerdaggersolid} and \cite[Proposition 2.1]{hyrysmith}. Since we have $m \deg \mathcal{L} = \deg \varphi^\ast \mathcal{O}_Y(1)$, elements in $H^0(X, \mathcal{L})$ correspond to homogeneous elements of degree $\frac{1}{m}$ in $S$. Moreover, we have $\frac{1}{m} \leq \deg \mathcal{L}/\deg \varphi < \eps$ since $\deg \mathcal{L}^m/\deg \varphi \geq 1$.

Now consider the exact sequence \[\begin{xy} \xymatrix{0 \ar[r]& \varphi^\ast \mathcal{S} \ar[r]& \varphi^\ast \mathcal{S}' \ar[r]& \mathcal{O}_X \ar[r]& 0,}\end{xy}\] where $\mathcal{S}' = \Syz(f_0, f_1, \ldots, f_n)(d_0)$ is the extension given by $c$, tensor with $\mathcal{L}$ and take cohomology. This yields that $s \in H^0(X, \mathcal{L})$ has a nonzero preimage $r \in H^0(X, \varphi^\ast \mathcal{S}' \otimes \mathcal{L})$ which is not contained in $H^0(X, \varphi^\ast \mathcal{S} \otimes \mathcal{L})$.

The section $r$ therefore corresponds to a relation $a_0 f_0 = \sum_i a_i f_i$ in $S$ and since it is not contained in $\varphi^\ast \mathcal{S}$ we must have $a_0 \neq 0$ and $a_0$ has degree $\frac{1}{m} < \eps$.
\end{proof}

\begin{Bem}
\begin{enumerate}[(a)]
\item{

Of course, we would very much like to drop the assumption that the annihilating line bundles be roots of $\mathcal{O}_Y(1)$. But we do not know whether this is possible. The critical point where the proof of \cite[Theorem 5.6]{brennerstaeblerdaggersolid} fails to work is that it is not enough that $\deg \mathcal{L}^\vee(t) > 0$ any more. This ensured that the line bundle was ample so that it was globally generated after some finite dominant pullback. In general, we need that $\mathcal{L}^\vee(t)$ is generated by global sections (after some finite dominant pullback) while at the same time bounding $\deg \mathcal{L}^\vee(t)/\deg \varphi$. Of course, there is some power $t$ such that $\mathcal{L}^\vee(t)$ is generated by global sections but we cannot control $t$ while bounding the quotient -- not even on a curve.

The ampleness property is most likely too restrictive in higher dimensions. Ideally one would like to have a property on the annihilating line bundle $\mathcal{L}$ such that a ``small'' twist of $\mathcal{L}^\vee$ by a suitable root of $\mathcal{O}_Y(1)$ is semiample.}
\item{Assume that $R$ is an $\mathbb{N}$-graded normal domain over an algebraically closed field $R_0$, $\dim R \geq 2$ and that $\mathcal{O}_X(1) = R(1)^\sim$ on $X = \Proj R$ is invertible. Furthermore, let $(f_1, \ldots, f_n)$ be a homogeneous $R_+$-primary ideal with homogeneous generators $f_i$ and let $f_0 \in R_{d_0}$.

If the characteristic of $R_0$ is positive then the issue whether $\delta(f_0)$ is almost zero in $H^1(X, \Syz(f_1, \ldots, f_n)(d_0))$ with respect to roots of $\mathcal{O}_X(1)$ is equivalent to whether the torsor $T$ associated to $\delta(f_0)$ has cohomological dimension $d - 1$. And in characteristic zero one still has that the cohomological dimension of $T$ is $d-1$ if $\delta(f_0)$ is almost zero with respect to roots of $\mathcal{O}_X(1)$.

This follows via a geometric interpretation for another closure operation -- the so-called solid closure (see \cite[Proposition 3.9]{brennertightproj}). By \cite[Corollary 4.7]{brennerstaeblerdaggerregular} dagger closure is contained in solid closure and they both coincide with tight closure in positive characteristic by \cite[Corollary 2.12]{brennerstaeblerdaggerregular} and \cite[Paragraph 8]{hochstersolid}.

It would be very interesting to have geometric proofs of these facts. In particular, having a geometric proof in characteristic zero might yield an alternative characterisation of almost zero with respect to roots of $R(1)^\sim$.}
\end{enumerate}
\end{Bem}

\begin{Prop}
\label{PropExclusionBound}
Let $Y$ be a polarised projective normal variety over an algebraically closed field $k$ and $\mathcal{S}$ a locally free sheaf on $Y$. If $c \in H^1(Y, \mathcal{S}), c \neq 0$ is almost zero then $\bar{\mu}_{\max}(\mathcal{S}') \geq 0$, where $\mathcal{S}'$ denotes the extension of $\mathcal{O}_Y$ by $\mathcal{S}$ induced by $c$. 
\end{Prop}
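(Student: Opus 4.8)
The plan is to translate the cohomological vanishing in the definition of ``almost zero'' into the statement that, after a suitable finite dominant pullback, the extension sheaf $\mathcal{S}'$ acquires an embedded line subbundle whose degree is only slightly negative; that is exactly the sort of estimate that bounds $\bar{\mu}_{\max}$ from below. The input that makes this work is Remark \ref{Observation}, which already identifies an annihilating section with a morphism from a dual line bundle into the pullback of $\mathcal{S}'$.

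Concretely, fix $\eps > 0$. Applying Definition \ref{AlmostZeroDefGeneral} to $c$, I obtain a finite dominant morphism $\varphi \colon Y' \to Y$ of projective varieties, a semiample line bundle $\mathcal{L}$ on $Y'$ with a nonzero global section $s$, such that $\deg \mathcal{L}/\deg \varphi < \eps$ (degrees on $Y'$ with respect to $\varphi^\ast \mathcal{O}_Y(1)$) and $s\,\varphi^\ast(c) = 0$ in $H^1(Y', \varphi^\ast \mathcal{S} \otimes \mathcal{L})$. Passing to the normalisation of $Y'$ changes neither $\deg \mathcal{L}$, nor $\deg \varphi$, nor the vanishing of $s\,\varphi^\ast(c)$, the normalisation morphism being finite and birational, so I may assume $Y'$ normal and in particular integral. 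Then the argument of Remark \ref{Observation} applies verbatim: the long exact cohomology sequence of $0 \to \varphi^\ast \mathcal{S} \otimes \mathcal{L} \to \varphi^\ast \mathcal{S}' \otimes \mathcal{L} \to \mathcal{L} \to 0$ lifts $s$ to a nonzero section of $\varphi^\ast \mathcal{S}' \otimes \mathcal{L}$, equivalently to a nonzero morphism $\psi \colon \mathcal{L}^\vee \to \varphi^\ast \mathcal{S}'$. Because $\mathcal{L}^\vee$ is invertible on the integral scheme $Y'$ while $\varphi^\ast \mathcal{S}'$ is locally free, hence torsion-free, the kernel of $\psi$ is a torsion subsheaf of $\mathcal{L}^\vee$, hence zero; so $\psi$ exhibits $\mathcal{L}^\vee$ as a rank-one subsheaf of $\varphi^\ast \mathcal{S}'$. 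Therefore $\mu_{\max}(\varphi^\ast \mathcal{S}') \geq \mu(\mathcal{L}^\vee) = -\deg \mathcal{L}$, and dividing by $\deg \varphi$ gives
\[ \frac{\mu_{\max}(\varphi^\ast \mathcal{S}')}{\deg \varphi} \;\geq\; -\frac{\deg \mathcal{L}}{\deg \varphi} \;>\; -\eps. \]
Since $\eps > 0$ was arbitrary and $\varphi$ ranges over all finite dominant morphisms, this forces $\bar{\mu}_{\max}(\mathcal{S}') \geq 0$.

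The argument is short and there is no single serious obstacle; the points that need a little care are the verification that $\psi$ is nonzero and injective — which is where torsion-freeness of $\varphi^\ast \mathcal{S}'$ and integrality of $Y'$ are used — and the bookkeeping that $\deg \mathcal{L}/\deg \varphi$ is preserved under normalisation. The hypothesis $c \neq 0$ is not actually needed: if $c = 0$ then $\mathcal{S}' = \mathcal{S} \oplus \mathcal{O}_Y$ and $\bar{\mu}_{\max}(\mathcal{S}') \geq \bar{\mu}_{\max}(\mathcal{O}_Y) = 0$ directly from the splitting.
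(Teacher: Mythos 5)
Your proof is correct and follows essentially the same route as the paper: invoke the definition of almost zero together with Remark \ref{Observation} to produce a nontrivial morphism $\mathcal{L}^\vee \to \varphi^\ast \mathcal{S}'$ with $\deg \mathcal{L}/\deg \varphi < \eps$, deduce $\mu_{\max}(\varphi^\ast \mathcal{S}') \geq -\deg \mathcal{L}$, and let $\eps \to 0$. The only (immaterial) difference is that you verify injectivity of $\mathcal{L}^\vee \to \varphi^\ast\mathcal{S}'$ directly, whereas the paper cites Lemma \ref{muMaxMin} for the slope inequality.
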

\begin{proof}
Let $\varphi: Y' \to Y$ be a finite morphism of projective varieties and $\mathcal{L}$ a semiample line bundle on $Y'$ with a nonzero global section $s$ such that $sc \in H^1(\varphi^\ast\mathcal{S} \otimes \mathcal{L})$ is zero.

By Remark \ref{Observation}, we obtain a nontrivial morphism $\mathcal{L}^\vee \to \varphi^\ast \mathcal{S}'$. Hence, by Lemma \ref{muMaxMin} we obtain $\deg \mathcal{L}^\vee \leq \mu_{\max}(\varphi^\ast \mathcal{S}')$. Since $\deg \varphi \cdot \bar{\mu}_{\max}(\mathcal{S}') \geq \mu_{\max}(\varphi^\ast \mathcal{S}')$ we obtain the desired inequality. Indeed, we have $\deg \mathcal{L}^\vee/ \deg \varphi \leq \bar{\mu}_{\max}(\mathcal{S}')$ and since $c$ is assumed to be almost zero we find for $\eps >0$ a $\varphi:Y' \to Y$ and $\mathcal{L}$ with $\deg \mathcal{L} / \deg \varphi < \eps$.
\end{proof}

An immediate consequence is

\begin{Ko}
\label{KSchrott}
Let $R$ be an $\mathbb{N}$-graded domain such that $\mathcal{O}_Y(1) = R(1)^\sim$ on $Y = \Proj R$ is invertible. Furthermore, let $I$ be a homogeneous $R_+$-primary ideal with homogeneous generators $f_1, \ldots, f_n$. If $f_0 \in (f_1, \ldots, f_n)^{\dagger \text{GR}}$ and $f_0 \notin I$ then $\bar{\mu}_{\max}(\Syz(f_0, \ldots, f_n)) \geq 0$.
\end{Ko}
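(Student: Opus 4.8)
The statement follows by combining Propositions \ref{PropExclusionBound} and \ref{Varazroots}, so the plan is essentially one of translation. Write $d_i=\deg f_i$ for $0\le i\le n$; note $d_0\ge 1$, since a unit $f_0$ would force $I^{\dagger\mathrm{GR}}=R$, which is impossible for a proper ideal. As $I$ is $R_+$-primary, the map $\bigoplus_{i=1}^{n}\mathcal{O}_Y(-d_i)\xrightarrow{(f_1,\dots,f_n)}\mathcal{O}_Y$ is surjective, so $\mathcal{S}:=\Syz(f_1,\dots,f_n)(d_0)$ is locally free, and the $d_0$-twisted presenting sequence yields a connecting map $\delta\colon R_{d_0}\to H^1(Y,\mathcal{S})$. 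Put $c=\delta(f_0)$; the extension of $\mathcal{O}_Y$ by $\mathcal{S}$ classified by $c$ is $\mathcal{S}':=\Syz(f_0,f_1,\dots,f_n)(d_0)$, the ($d_0$-twisted) syzygy bundle of all the $f_i$ that appears in the statement.

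By Remark \ref{Bemallgaz}, and since $R^{+\mathrm{GR}}$ is insensitive to finite graded extensions, I would pass to a finite normal graded extension $\widetilde{R}$ of $R$ with $\widetilde{R}_0=k$, $\dim\widetilde{R}\ge 2$, and $Y':=\Proj\widetilde{R}$ covered by the $D_+(g)$, $g\in\widetilde{R}_1$; let $\pi\colon Y'\to Y$ be the resulting finite dominant morphism, of degree $1$. Since $\pi$ is among the morphisms defining $\bar\mu_{\max}$, one has $\bar\mu_{\max}(\mathcal{S}')\ge\bar\mu_{\max}(\pi^\ast\mathcal{S}')$, and $\pi^\ast\mathcal{S}'=\Syz(f_0,f_1,\dots,f_n)(d_0)$ computed over $\widetilde{R}$; so it suffices to prove $\bar\mu_{\max}(\pi^\ast\mathcal{S}')\ge 0$ on the \emph{normal} variety $Y'$.

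Then I would split into two cases. If $f_0\in I\widetilde{R}$, write $f_0=\sum_{i\ge 1}g_if_i$ with $g_i\in\widetilde{R}_{d_0-d_i}$; the syzygy $(1,-g_1,\dots,-g_n)$ is homogeneous of degree $d_0$, so $\mathcal{O}_{Y'}\hookrightarrow\pi^\ast\mathcal{S}'$ is a (split) subbundle, whence $\bar\mu_{\max}(\pi^\ast\mathcal{S}')\ge\mu_{\max}(\pi^\ast\mathcal{S}')\ge\deg\mathcal{O}_{Y'}=0$. If instead $f_0\notin I\widetilde{R}$, then, because $\widetilde{R}$ is normal of dimension $\ge 2$ (so $\widetilde{R}_m=\Gamma(Y',\mathcal{O}_{Y'}(m))$ for $m\ge 0$, $\Gamma(Y',\mathcal{O}_{Y'}(m))=0$ for $m<0$ by ampleness, and the kernel of $\delta$ over $\widetilde{R}$ is precisely $I\widetilde{R}$), the pulled-back class $\pi^\ast c=\delta_{Y'}(f_0)$ is nonzero. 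Since $f_0\in I^{\dagger\mathrm{GR}}=(I\widetilde{R})^{\dagger\mathrm{GR}}$, Proposition \ref{Varazroots} applied to $\widetilde{R}$ and $I\widetilde{R}$ (only the ``easy'' implication, and only that the class is almost zero — not the finer statement about roots of $\mathcal{O}_{Y'}(1)$) shows $\pi^\ast c$ is almost zero in the sense of Definition \ref{AlmostZeroDefGeneral}. Proposition \ref{PropExclusionBound}, applied to the nonzero almost zero class $\pi^\ast c$ on the normal variety $Y'$, would then yield $\bar\mu_{\max}(\pi^\ast\mathcal{S}')\ge 0$, which is what we want.

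I do not expect a real obstacle here: the paper is right to call this ``an immediate consequence'', and the substance lies in Propositions \ref{PropExclusionBound} and \ref{Varazroots}. The only points that genuinely need attention are the reduction to a normal model and the attendant bookkeeping — that $\bar\mu_{\max}$, being a supremum over finite dominant pullbacks, cannot decrease under the degree-one normalisation; that the kernel of $\delta$ equals $I$ only once one knows $R$ is normal of dimension $\ge 2$, which is what forces the case split according to whether $f_0$ remains outside the extended ideal after passing to $\widetilde{R}$; and keeping track of the twist by $d_0$, which (as $d_0\ge 1$) can only help.
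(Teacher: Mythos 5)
Your argument is correct and is precisely the derivation the paper intends: the paper offers no separate proof, calling the corollary an immediate consequence of Proposition \ref{PropExclusionBound} (via the first direction of Proposition \ref{Varazroots}), and your additional care with the reduction to a normal model of dimension $\geq 2$ and with the case $f_0 \in I\widetilde{R}$ only fills in what the paper leaves implicit. One caveat: what you (and Proposition \ref{PropExclusionBound}) actually establish is $\bar{\mu}_{\max}(\Syz(f_0, \ldots, f_n)(d_0)) \geq 0$, and the corollary's $\Syz(f_0,\ldots,f_n)$ must be read with that twist as in Proposition \ref{Varazroots} --- your closing remark that the twist ``can only help'' is backwards, since the untwisted inequality is strictly stronger and is in fact false in general (e.\,g.\ for $z^2 \in (x,y)^{\dagger\text{GR}}$ on the Fermat cubic one has $\bar{\mu}_{\max}(\Syz(z^2,x,y)) = -6$ while $\bar{\mu}_{\max}(\Syz(z^2,x,y)(2)) = 0$).
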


\begin{Bem}
We do not think that the result of Corollary \ref{KSchrott} actually provides a useful exclusion bound if $\dim R = d + 1 \geq 3$. The heuristic here is that in higher dimensions the first syzygy bundle has to be ``very positive'' if one wants to have nontrivial containment relations. It seems that in this case one should look at the $d$th cohomology of the $d$th syzygy bundle in a resolution on $\Proj R$.

Consider $R = k[x,y,z]$ and $I = (x^a, y^a, z^a)$, where $k$ is an algebraically closed field. Since $x^a, y^a, z^a$ form a regular sequence of parameters the Koszul complex is a free resolution of $R/I$.
  
Sheafifying this complex on $\Proj R = \mathbb{P}^2_k$ we obtain $\mathcal{S}_1 = \Syz(x^a, y^a, z^a)$ as the first syzygy bundle. The bundle $\mathcal{S}_1$ is strongly semistable of slope $\mu(\mathcal{S}_1) = -\frac{3}{2}a$ and the minimal $l$ such that $R_l \subseteq (x^a, y^a, z^a)^{\dagger \text{GR}}$ is $l = 3a -2$ (of course the polarisation is with respect to $\mathcal{O}_{\mathbb{P}^2_k}(1)$). This follows since $(x^a, y^a, z^a)^{\dagger \text{GR}}  = (x^a, y^a, z^a)$ by \cite[Corollary 3.9]{brennerstaeblerdaggerregular}. The top-dimensional syzygy bundle is given by $\mathcal{S}_d = \mathcal{O}_{\mathbb{P}^2_k}(-3a)$ which is of slope $\mu(\mathcal{S}_d) = -3a$, while $\mu(\mathcal{S}_1) = -\frac{3}{2}a$.

If $\chara k =0$ then using restriction theorems (e.\,g.\ \cite[Theorem 1.2]{flennerrestriction}) one has that $\mathcal{S}_1\vert_C$ is semistable on a suitable smooth curve $C \subset \mathbb{P}^2_k$ and the slope of $\mathcal{S}_1\vert_C$ is $- \frac{3}{2} a \deg C$. In particular, the minimal $l$ is in this case $\frac{3}{2} a$ (use \cite[Proposition 7.9]{brennerstaeblerdaggersolid}). Of course, on a curve $\mathcal{S}_1\vert_C$ is the top dimensional syzygy bundle.
\end{Bem}
\section{The inclusion result}

\begin{Def}
Let $X$ be a projective variety over an algebraically closed field and let $\mathcal{L}$ be an ample line bundle on $X$. The pair $(X, \mathcal{L})$ is called \emph{arithmetically Cohen-Macaulay} if the section ring $\bigoplus_{n \geq 0} \Gamma(X, \mathcal{L}^n)$ is Cohen-Macaulay.
\end{Def}

Note that our definition of arithmetically Cohen-Macaulay is slightly different from \cite[Definition 1.2.2]{miglioredeficiency} which seems to be the standard one. Our definition looks at the section ring of $\mathcal{L}$ which is, in the case that $\mathcal{L}$ is very ample and $X$ normal, the normalisation of the homogeneous coordinate ring of the embedding (see \cite[Ex.\ II.5.14]{hartshornealgebraic}).

\begin{Prop}
\label{AbelianNotACM}
Let $X$ be an abelian variety and $\mathcal{L}$ an ample invertible sheaf. Then $(X ,\mathcal{L})$ is arithmetically Cohen-Macaulay if and only if $\dim X = 1$, that is, $X$ is an elliptic curve.
\end{Prop}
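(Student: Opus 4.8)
The plan is to translate the statement into local cohomology of the section ring $S = \bigoplus_{n \geq 0}\Gamma(X,\mathcal{L}^n)$, with irrelevant maximal ideal $\mathfrak{m} = S_+$; so $(X,\mathcal{L})$ is arithmetically Cohen--Macaulay exactly when $\depth_{\mathfrak{m}} S = \dim S$. Recall that $S$ is a finitely generated normal $\mathbb{N}$-graded $k$-domain (as in the proof of Proposition~\ref{Varazroots}, via \cite[Proposition 3.6]{brennerstaeblerdaggersolid} and \cite[Proposition 2.1]{hyrysmith}), that $\dim S = \dim X + 1$, and that $\Proj S \cong X$ (for instance because $\mathcal{L}^{\ell}$ is very ample for $\ell \geq 3$ by Proposition~\ref{AbelianProperties}(c), so $\Proj S = \Proj S^{(\ell)} \cong X$). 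If $\dim X = 1$, then $S$ is a normal domain of Krull dimension $2$, hence Cohen--Macaulay by Serre's criterion (the condition $S_2$ in dimension $2$ forces $\depth_{\mathfrak{m}} S = 2 = \dim S$), so $(X,\mathcal{L})$ is arithmetically Cohen--Macaulay.

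For the converse, assume $d := \dim X \geq 2$; I will show $S$ is not Cohen--Macaulay by exhibiting nonvanishing local cohomology below the dimension. The key tool is the standard comparison between graded local cohomology of $S$ and sheaf cohomology on $\Proj S \cong X$, namely $H^{i}_{\mathfrak{m}}(S) \cong \bigoplus_{n \in \mathbb{Z}} H^{i-1}\bigl(X, \widetilde{S(n)}\bigr)$ for all $i \geq 2$. Taking $i = 2$ and degree $n = 0$ gives $H^{2}_{\mathfrak{m}}(S)_{0} \cong H^{1}(X, \mathcal{O}_X)$, which by Proposition~\ref{AbelianProperties}(a) has $k$-dimension $\binom{d}{1} = d \geq 2$, in particular is nonzero. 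Hence $H^{2}_{\mathfrak{m}}(S) \neq 0$ while $2 < d+1 = \dim S$; therefore $\depth_{\mathfrak{m}} S \leq 2 < \dim S$, and $S$ fails to be Cohen--Macaulay.

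The one point that needs care is the validity of this comparison isomorphism when the section ring is not standard-graded. If one prefers to sidestep that, pass to the Veronese subring $S^{(\ell)} = \bigoplus_{n \geq 0}\Gamma(X,\mathcal{L}^{\ell n})$ for $\ell \gg 0$, which is the standard-graded homogeneous coordinate ring of a projectively normal embedding of $X$; the textbook form of the comparison applies there and yields $H^{2}_{S^{(\ell)}_+}(S^{(\ell)}) \neq 0$, and since local cohomology commutes with the Veronese construction (so that every Veronese subring of a Cohen--Macaulay ring is again Cohen--Macaulay), $S$ itself cannot be Cohen--Macaulay. Everything else is routine bookkeeping, and I expect the only genuinely delicate issue to be this reduction to the standard-graded case.
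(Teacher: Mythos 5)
Your proposal is correct and follows essentially the same route as the paper: both reduce to showing $H^2_{S_+}(S)_0 \cong H^1(X,\mathcal{O}_X) \neq 0$ via the standard comparison between graded local cohomology and sheaf cohomology, using Proposition~\ref{AbelianProperties}(a). The only differences are cosmetic refinements on your part --- Serre's criterion for the dimension-one direction where the paper cites Hyry--Smith, and the Veronese reduction to justify the comparison isomorphism for non-standard gradings, a point the paper passes over by citing Eisenbud's Theorem~A4.1 directly.
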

\begin{proof}
The if-part follows from \cite[Proposition 2.1 (1)]{hyrysmith}. Conversely, the section ring $S$ is Cohen-Macaulay if and only if $H^i_{S_+}(S) = 0$ for $0 \leq i \leq \dim S -1$ (see \cite[Propositions 3.5.4, 3.6.4 and Theorem 3.6.3]{brunsherzog}) and by \cite[Theorem A4.1]{Eisenbud} we have $H^i(X, \mathcal{O}_X(n)) = H^{i+1}_{S_+}(S)_n$ for $i > 0$. Proposition \ref{AbelianProperties} (a) implies that $\mathcal{O}_X$ has non-vanishing intermediate cohomology if $\dim X \geq 2$. Hence, $S$ cannot be Cohen-Macaulay in this case.
\end{proof}

\begin{Le}
\label{KohoHoppingAb}
Let $X$ denote an abelian variety of dimension $d$ and $\mathcal{O}_X(1)$ a very ample line bundle on $X$. Let $\mathcal{S}$ be a locally free sheaf on $X$. Let
\[\begin{xy}\xymatrix{\ldots \ar[r]& \mathcal{G}_3 \ar[r]& \mathcal{G}_2 \ar[r]& \mathcal{S} \ar[r]& 0 }\end{xy}\] 
denote an exact complex of sheaves, where $\mathcal{G}_j$ has type $\bigoplus_{(k,j)} \mathcal{O}_Y(-\alpha_{k,j})$ with $\alpha_{k,j} \neq 0$. Set $\mathcal{S}_1 = \mathcal{S}$ and set $\mathcal{S}_j = \im (\mathcal{G}_{j+1} \to \mathcal{G}_j) = \ker (\mathcal{G}_j \to \mathcal{G}_{j-1})$ for $j \geq 2$. Fix $i \in \{1, \ldots, d\}$. Then there are isomorphisms $H^i(X, \mathcal{S}_1) = H^{i+1}(X, \mathcal{S}_2) = \ldots = H^{d-1}(X, \mathcal{S}_{d-i})$ and an inclusion $H^i(X, \mathcal{S}_1) \to H^d(X, \mathcal{S}_{d + i -1})$.
\end{Le}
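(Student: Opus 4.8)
The plan is a cohomology-hopping argument along the short exact sequences into which the complex decomposes; the only input special to abelian varieties is the vanishing of the \emph{intermediate} cohomology of the $\mathcal{G}_j$.

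First I would record the short exact sequences. Exactness of the complex gives, for every $j \geq 1$,
\[0 \longrightarrow \mathcal{S}_{j+1} \longrightarrow \mathcal{G}_{j+1} \longrightarrow \mathcal{S}_j \longrightarrow 0,\]
the map $\mathcal{G}_{j+1} \to \mathcal{S}_j$ being the corestriction of $\mathcal{G}_{j+1} \to \mathcal{G}_j$ onto its image (for $j = 1$ this reads $0 \to \mathcal{S}_2 \to \mathcal{G}_2 \to \mathcal{S} = \mathcal{S}_1 \to 0$). Next comes the key vanishing: $H^l(X, \mathcal{G}_j) = 0$ for every $j$ and every $l$ with $1 \leq l \leq d-1$. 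As cohomology commutes with finite direct sums, it is enough to treat a single summand $\mathcal{O}_X(-\alpha)$, $\alpha \neq 0$. If $\alpha > 0$ then $\mathcal{O}_X(\alpha)$ is ample, hence $i(\mathcal{O}_X(\alpha)) = 0$ (an ample line bundle on an abelian variety has no higher cohomology; see \cite{mumfordabelian}), so the ``moreover'' in Proposition \ref{AbelianProperties}(b), applied to $\mathcal{L} = \mathcal{O}_X(\alpha)$, yields $H^l(X, \mathcal{O}_X(-\alpha)) \neq 0$ only for $l = d - i(\mathcal{O}_X(\alpha)) = d$, so it vanishes for $l \leq d-1$. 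If $\alpha < 0$ then $\mathcal{O}_X(-\alpha)$ is itself ample, so $H^l(X, \mathcal{O}_X(-\alpha)) = 0$ for all $l \geq 1$. Either way the cohomology in degrees $1, \dots, d-1$ vanishes. (For $\alpha > 0$ one has $H^d(X, \mathcal{O}_X(-\alpha)) \neq 0$; this non-vanishing is the obstruction to an isomorphism at the top end and the source of Proposition \ref{AbelianNotACM}.)

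Now fix $i \in \{1, \dots, d\}$. For $1 \leq j \leq d-i-1$ the long exact sequence of the short exact sequence above reads, around degree $i+j-1$,
\[H^{i+j-1}(X, \mathcal{G}_{j+1}) \to H^{i+j-1}(X, \mathcal{S}_j) \to H^{i+j}(X, \mathcal{S}_{j+1}) \to H^{i+j}(X, \mathcal{G}_{j+1}),\]
and since $1 \leq i+j-1$ and $i+j \leq d-1$ in this range, both outer groups vanish by the key vanishing and the middle arrow is an isomorphism. Splicing these for $j = 1, \dots, d-i-1$ produces
\[H^i(X, \mathcal{S}_1) \cong H^{i+1}(X, \mathcal{S}_2) \cong \cdots \cong H^{d-1}(X, \mathcal{S}_{d-i}).\]
For the last inclusion, apply the long exact sequence of $0 \to \mathcal{S}_{d-i+1} \to \mathcal{G}_{d-i+1} \to \mathcal{S}_{d-i} \to 0$ in degree $d-1$:
\[H^{d-1}(X, \mathcal{G}_{d-i+1}) \to H^{d-1}(X, \mathcal{S}_{d-i}) \xrightarrow{\ \partial\ } H^d(X, \mathcal{S}_{d-i+1}).\]
Here $H^{d-1}(X, \mathcal{G}_{d-i+1}) = 0$ (again $1 \leq d-1 \leq d-1$), so $\partial$ is injective; note that one needs only the vanishing of $H^{d-1}$ of this $\mathcal{G}$, not of $H^d$, so $\partial$ need not be onto. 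Composing with the chain of isomorphisms gives the inclusion $H^i(X, \mathcal{S}_1) \hookrightarrow H^d(X, \mathcal{S}_{d-i+1})$; for $i = 1$, the case used in the next section, the target is $\mathcal{S}_d$. The cases $d = 1$ and $i = d$, where the chain of isomorphisms is empty and the inclusion is an identity, are trivial.

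I do not anticipate a real obstacle here: once the intermediate vanishing is in place, the whole argument is formal manipulation of long exact sequences, and the only genuinely non-formal point is that vanishing --- it has to hold for each twist $\mathcal{O}_X(-\alpha)$, $\alpha \neq 0$, of either sign --- which is exactly where the geometry of abelian varieties (Mumford's Vanishing Theorem, $i(\mathcal{L}) = 0$ for ample $\mathcal{L}$) is needed. Beyond that, the only thing requiring attention is keeping the running cohomological degree and the running index of the $\mathcal{S}_j$ aligned.
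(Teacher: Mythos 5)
Your proposal is correct and takes essentially the same route as the paper: the same decomposition into short exact sequences $0 \to \mathcal{S}_{j+1} \to \mathcal{G}_{j+1} \to \mathcal{S}_j \to 0$, the same intermediate vanishing $H^l(X,\mathcal{G}_j)=0$ for $1 \leq l \leq d-1$ via Proposition \ref{AbelianProperties}(b), and the same splicing of long exact sequences. Note that the target index you obtain, $\mathcal{S}_{d-i+1}$, matches the paper's own proof (which writes $d+1-i$) rather than the $d+i-1$ appearing in the statement of the lemma; the latter is evidently a typo, and the two agree in the case $i=1$ used in Theorem \ref{AbelianInclusion}.
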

\begin{proof}
Since $\mathcal{O}_X(1)$ is very ample the only non-vanishing cohomology group of $\mathcal{O}_X(n)$ is $H^0(X, \mathcal{O}_X(n))$ for $n > 0$, and $H^d(X, \mathcal{O}_X(n))$ for $n < 0$, by Proposition \ref{AbelianProperties} (b). Since all the $\alpha_{k,j}$ are nonzero this means that $H^i(X, \mathcal{G}_j) = 0$ for $j \geq 2$ and $i = 1, \ldots, d-1$.

Looking at the cohomology of the short exact sequences $0 \to \mathcal{S}_{j+1} \to \mathcal{G}_{j+1} \to \mathcal{S}_j \to 0$ we can thus extract isomorphisms
\[
 H^i(X, \mathcal{S}_j) = H^{i+1}(X, \mathcal{S}_{j+1}) \text{ for } i = 1, \ldots, d-2,
\]
and inclusions $H^{d-1}(X, \mathcal{S}_{j}) \to H^d(X, \mathcal{S}_{j+1})$.
In particular, we have isomorphisms $H^i(X, \mathcal{S}_1) = H^{i+1}(X, \mathcal{S}_2) = \ldots = H^{d-1}(X, \mathcal{S}_{d-i})$ and an inclusion $H^i(X, \mathcal{S}_1) \to H^{d}(X, \mathcal{S}_{d + 1-i})$.
\end{proof}

\begin{Le}
Let $(X, \mathcal{O}_X(1))$ be a polarised abelian variety of dimension $d$ and assume that $\mathcal{O}_X(1)$ is symmetric. Then for a locally free sheaf $\mathcal{S}$ on $X$ we have $\mu(N_X^\ast \mathcal{S}) = N^2 \mu(\mathcal{S})$, where both degrees are with respect to $\mathcal{O}_X(1)$.
\end{Le}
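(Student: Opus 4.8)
The plan is to split the slope into its rank and degree parts. Since $N_X$ is dominant, pulling back along it does not change the rank, so $\rk N_X^\ast \mathcal{S} = \rk \mathcal{S}$ and it suffices to prove the purely numerical statement $\deg N_X^\ast \mathcal{S} = N^2 \deg \mathcal{S}$, with both degrees taken with respect to $\mathcal{O}_X(1)$. The starting point is that $\mathcal{O}_X(1)$ is symmetric, so Proposition \ref{AbelianMult} gives $N_X^\ast \mathcal{O}_X(1) \cong \mathcal{O}_X(1)^{N^2}$; thus the polarisation $N_X^\ast\mathcal{O}_X(1)$ that naturally shows up on the source is merely a power of the original one, and the whole argument is bookkeeping between these two polarisations.

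The first ingredient is the scaling behaviour of the degree under powering the polarisation: for a smooth projective variety of dimension $d$, an ample line bundle $\mathcal{M}$ and an integer $k \geq 1$ one has $\deg_{\mathcal{M}^k} \mathcal{F} = c_1(\mathcal{F}).(k\, c_1(\mathcal{M}))^{d-1} = k^{d-1}\deg_{\mathcal{M}}\mathcal{F}$, simply because the degree is the intersection number of $c_1(\mathcal{F})$ with the $(d-1)$-st power of the polarisation. Applying this with $\mathcal{M} = \mathcal{O}_X(1)$, $k = N^2$, $\mathcal{F} = N_X^\ast\mathcal{S}$ and using $\mathcal{O}_X(1)^{N^2} = N_X^\ast\mathcal{O}_X(1)$ yields
\[
\deg_{N_X^\ast \mathcal{O}_X(1)} N_X^\ast \mathcal{S} = N^{2(d-1)} \deg_{\mathcal{O}_X(1)} N_X^\ast \mathcal{S}.
\]

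The second ingredient is the transformation formula for the slope under a finite dominant separable morphism recalled in Section~1 (from \cite[proof of Lemma 3.2.2]{huybrechtslehn}). When $\chara k \nmid N$ the morphism $N_X$ is finite, dominant and separable of degree $N^{2d}$ by Proposition \ref{AbelianMult}, so taking $N_X^\ast\mathcal{O}_X(1)$ as polarisation on the source we get $\mu_{N_X^\ast\mathcal{O}_X(1)}(N_X^\ast\mathcal{S}) = N^{2d}\,\mu_{\mathcal{O}_X(1)}(\mathcal{S})$, hence, multiplying by the unchanged rank, $\deg_{N_X^\ast\mathcal{O}_X(1)} N_X^\ast\mathcal{S} = N^{2d}\deg_{\mathcal{O}_X(1)}\mathcal{S}$. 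Comparing this with the displayed identity gives $N^{2(d-1)}\deg_{\mathcal{O}_X(1)} N_X^\ast\mathcal{S} = N^{2d}\deg_{\mathcal{O}_X(1)}\mathcal{S}$, i.e. $\deg_{\mathcal{O}_X(1)} N_X^\ast\mathcal{S} = N^2\deg_{\mathcal{O}_X(1)}\mathcal{S}$, and dividing by the rank yields the claim. For $\chara k \mid N$ one argues directly: $N_X$ is still finite of degree $N^{2d}$ (a numerical fact, independent of separability), and the computation via $c_1(N_X^\ast\mathcal{S}) = N_X^\ast c_1(\mathcal{S})$, the identity $N_X^\ast c_1(\mathcal{O}_X(1))^{d-1} = N^{2(d-1)}c_1(\mathcal{O}_X(1))^{d-1}$, and the fact that $N_X^\ast$ multiplies the degree of a zero-cycle by $N^{2d}$, goes through verbatim.

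The computation is routine; the only point that needs genuine care is to keep the two polarisations $\mathcal{O}_X(1)$ and $N_X^\ast\mathcal{O}_X(1) = \mathcal{O}_X(1)^{N^2}$ rigorously apart, and to remember that the degree is homogeneous of weight $d-1$ in the polarisation. It is precisely this weight $d-1$ that reconciles the factor $N^{2d}$ coming from $\deg N_X$ with the desired factor $N^2$.
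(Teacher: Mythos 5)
Your argument is correct and is essentially the paper's own proof: both compute the slope of $N_X^\ast\mathcal{S}$ with respect to the pulled-back polarisation $N_X^\ast\mathcal{O}_X(1)=\mathcal{O}_X(N^2)$ via the transformation formula $\mu(f^\ast\mathcal{S})=\deg f\cdot\mu(\mathcal{S})$, and then rescale by the factor $(N^2)^{d-1}$ coming from the weight $d-1$ of the degree in the polarisation, obtaining $N^{2d}/N^{2d-2}=N^2$. Your extra remark covering the case $\chara k \mid N$ via intersection theory is a harmless addition the paper does not bother with (the lemma is only applied with $\chara k \nmid N$).
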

\begin{proof}
The slope of $N^\ast_X \mathcal{S}$ with respect to $N_X^\ast\mathcal{O}_X(1) = \mathcal{O}_X(N^2)$ is $N^{2 d} \mu(\mathcal{S})$. The degree  with respect to $\mathcal{O}_X(l)$ of a line bundle $\mathcal{L}$ is given by \[\mathcal{L}.\mathcal{O}_X(l)^{d-1} = l^{d-1} \mathcal{L}.\mathcal{O}_X(1)^{d-1}.\]
Hence, the slope with respect to $\mathcal{O}_X(1)$ is given by $N^{2 d}/N^{2d - 2} \mu(\mathcal{S}) = N^2 \mu(\mathcal{S})$.
\end{proof}

\begin{Theo}
\label{AbelianInclusion}
Let $R = \bigoplus_{n \geq 0}\Gamma(X,\mathcal{O}_X(n))$ be the section ring of a polarised abelian variety $(X, \mathcal{O}_X(1))$ over an algebraically closed field of arbitrary characteristic and $\dim R = d +1$ (i.\,e.\ $\dim X = d$). Assume moreover that $\mathcal{O}_X(1)$ is symmetric. Let $I$ be a homogeneous $R_+$-primary ideal and let
\[ \ldots \to F_2 = \bigoplus_{(k,2)}R(-\alpha_{k,2}) \to F_1 = \bigoplus_{(k,1)}R(-\alpha_{k,1}) \to R \to R/I \to 0\] be a homogeneous complex of graded $R$-modules that is exact on $D_+(R_+)$ (e.\,g.\ a graded resolution of $I$).
Let \[ \ldots \to \mathcal{G}_2 = \bigoplus_{(k,2)} \mathcal{O}_X(-\alpha_{k,2}) \to  \mathcal{G}_1 = \bigoplus_{(k,1)} \mathcal{O}_X(-\alpha_{k,1}) \to \mathcal{O}_X \to 0 \]
denote the corresponding exact complex of sheaves on $X$.
Denote by $\mathcal{S}_j = \ker(\mathcal{G}_j \to \mathcal{G}_{j-1})$ the kernel sheaves. Finally, let $\nu = - \mu_{\min}(\mathcal{S}_d)/\deg \mathcal{O}_X(1)$.

Then we have the inclusion $R_{\geq \nu} \subseteq I^{\dagger \text{GR}}$.
\end{Theo}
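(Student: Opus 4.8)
The strategy is to reduce the membership $f_0 \in I^{\dagger \text{GR}}$, for a homogeneous $f_0$ of degree $\geq \nu$, to an almost-zero statement about the cohomology class $c = \delta(f_0) \in H^1(X, \mathcal{S}_1(d_0))$ and then to verify that statement by pulling back along the multiplication-by-$N$ maps $N_X$, using them as a characteristic-free substitute for the Frobenius. By Proposition~\ref{Varazroots} it suffices to show that $c$ is almost zero with annihilating line bundles taken as roots of $\mathcal{O}_X(1)$. First I would use Lemma~\ref{KohoHoppingAb} (cohomology hopping along the sheafified resolution) to transport $c$ to a class in $H^d(X, \mathcal{S}_d')$ for a suitable twist, where $\mathcal{S}_d'$ sits in the relevant extension; the point of passing to the top-dimensional syzygy sheaf is exactly that the intermediate cohomology of $\mathcal{O}_X$ causes no trouble there (cf. the discussion around Proposition~\ref{AbelianNotACM}), and one only needs the top cohomology $H^d$, which is controlled by $\mu_{\min}$ via Lemma~\ref{CohoSerreVanishing} and Serre duality on the abelian variety (where $\omega_X = \mathcal{O}_X$, so $\deg \omega_X = 0$).

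Next I would pull the whole configuration back along $N_X \colon X \to X$. By Proposition~\ref{AbelianMult} this is a finite surjective separable morphism of degree $N^{2d}$, and since $\mathcal{O}_X(1)$ is symmetric we have $N_X^\ast \mathcal{O}_X(1) = \mathcal{O}_X(N^2)$; moreover, by the lemma just before the theorem, $\mu(N_X^\ast \mathcal{S}) = N^2 \mu(\mathcal{S})$ with respect to $\mathcal{O}_X(1)$, and the same scaling holds for $\mu_{\min}$ (the minimal-slope analogue of the displayed transformation law, which also appears in the preliminaries for separable morphisms). So pulling back multiplies all the relevant slopes by $N^2$ while fixing the variety and its canonical sheaf. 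Now I would twist: the annihilating section will be a section $s$ of a root $\mathcal{L}$ of $N_X^\ast\mathcal{O}_X(1)$ of controlled degree, and tensoring the pulled-back top syzygy sheaf by $\mathcal{L}$ one needs $H^d(X, N_X^\ast \mathcal{S}_d(d_0) \otimes \mathcal{L}) = 0$; by Lemma~\ref{CohoSerreVanishing} this holds as soon as $\mu_{\min}$ of that sheaf exceeds $\deg \omega_X = 0$, i.e. as soon as $N^2(\mu_{\min}(\mathcal{S}_d) + d_0 \deg \mathcal{O}_X(1)) + \deg \mathcal{L} > 0$. Because $d_0 \geq \nu = -\mu_{\min}(\mathcal{S}_d)/\deg \mathcal{O}_X(1)$, the bracket is $\geq 0$; when it is strictly positive any $N$ works, and when it equals zero one needs the added positivity of $\deg \mathcal{L} > 0$ together with $N$ large enough — here one must be a little careful that a root $\mathcal{L}$ with $\deg \mathcal{L}/\deg N_X < \eps$ still has strictly positive degree, which it does since $\mathcal{L}$ is ample (being a root of a very ample bundle, after possibly replacing $\mathcal{O}_X(1)$ by a power using Proposition~\ref{AbelianProperties}(c)) and one may extract the needed root after a further finite pullback via Proposition~\ref{PullbackSemiampleGloballygenerated}.

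Granting the vanishing $H^d(X, N_X^\ast\mathcal{S}_d(d_0)\otimes\mathcal{L}) = 0$, the long exact sequence of the pulled-back extension $0 \to N_X^\ast\mathcal{S}_d(d_0)\otimes\mathcal{L} \to N_X^\ast\mathcal{S}_d'(d_0)\otimes\mathcal{L} \to \mathcal{L} \to 0$ shows that $s$ maps to $s \cdot N_X^\ast(c') = 0$ in that $H^d$, where $c'$ is the image of $c$ under the hopping isomorphism; undoing the hopping isomorphisms (which are compatible with multiplication by $s$, as they arise from maps of short exact sequences) gives $s\cdot N_X^\ast(c) = 0$ in $H^1$. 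Finally I would estimate $\deg \mathcal{L}/\deg(N_X \circ (\text{extra pullback}))$: since the extra pullback only increases $\deg\varphi$ and $\deg \mathcal{L}$ can be taken on the order of $(\deg N_X)/N^{2d}$ times a bounded root — more precisely $\frac{1}{m}$ in the section-ring language of Proposition~\ref{Varazroots}, with $m$ growing — this quotient tends to $0$ as $N \to \infty$, so $c$ is almost zero with roots of $\mathcal{O}_X(1)$, and Proposition~\ref{Varazroots} yields $f_0 \in I^{\dagger\text{GR}}$. The main obstacle I anticipate is the boundary case where $\mu_{\min}(\mathcal{S}_d) + d_0\deg\mathcal{O}_X(1) = 0$: there the cohomology vanishing is not automatic and one has to genuinely exploit a strictly positive twist $\mathcal{L}$ while simultaneously keeping $\deg\mathcal{L}/\deg\varphi$ below $\eps$, which forces one to interleave the root extraction (Proposition~\ref{PullbackSemiampleGloballygenerated}) with the multiplication maps and track the degrees carefully; a secondary technical point is justifying that $\mu_{\min}$ scales by $\deg\varphi$ under the separable morphism $N_X$ exactly as $\mu$ does, which is quoted in the preliminaries but must be applied to the not-necessarily-semistable sheaf $\mathcal{S}_d$.
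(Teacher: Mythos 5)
Your proposal follows essentially the same route as the paper: reduce to an almost-zero statement via Proposition \ref{Varazroots}, hop the class into the top cohomology of the $d$th syzygy sheaf via Lemma \ref{KohoHoppingAb}, scale slopes by pulling back along the multiplication maps $N_X$, and kill $H^d$ with Lemma \ref{CohoSerreVanishing}; even your worry about the boundary case $\mu_{\min}(\mathcal{S}_d)+d_0\deg\mathcal{O}_X(1)=0$ is resolved exactly as in the paper, by twisting with a strictly positive root of the pulled-back polarisation (the paper simply uses $\mathcal{O}_X(1)$ itself, which is automatically an $N^{2e}$-th root of $(N_X^e)^\ast\mathcal{O}_X(1)$, so no extra root extraction is needed). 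The only detail you gloss over is arranging that all twists $\alpha_{k,j}$ are nonzero before invoking Lemma \ref{KohoHoppingAb}, which the pull-back-and-twist step you describe already accomplishes.
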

\begin{proof}
Note that the $\mathcal{S}_j$ are locally free sheaves.
Fix an integer $N \geq 3$ such that $N \nmid \chara k$. Pulling back along $N_X$ we may assume $\mathcal{O}_X(1)$ to be very ample in light of Proposition \ref{AbelianProperties} (c). If we pull back along a finite dominant separable morphism $f: X' \to X$ then the polarisation on $X'$ will be given by $f^\ast \mathcal{O}_X(1)$.

By Proposition \ref{Varazroots} it is enough to show that $\mathcal{S}_1(m)$ is almost zero for $m \geq \nu$ with respect to roots of $\mathcal{O}_X(1)$. So let $\eps > 0$.

In order to be in a situation to apply Lemma \ref{KohoHoppingAb} we pull back the whole situation along $N_X^e$ for $e \gg 0$ and then tensor with $\mathcal{O}_X(1)$. Then we may assume that all $\alpha_{i,j}$ are nonzero and that $\deg \mathcal{O}_X(1)/\deg N_X^e < \eps$. We therefore have an inclusion \[H^1(X,{N_X^e}^\ast \mathcal{S}_1(m) \otimes \mathcal{O}_X(1)) \to H^d(X, {N_X^e}^\ast\mathcal{S}_d(m) \otimes \mathcal{O}_X(1)).\] Note that $\mu_{\min}({N_X^e}^\ast \mathcal{S}_d(m) \otimes \mathcal{O}_X(1)) \geq \deg \mathcal{O}_X(1) > 0$.
Again pulling back along $N_X^c$ for $c \gg 0$ we have that $\mu_{\min}({N_X^c}^\ast({N_X^e}^\ast \mathcal{S}_d(m) \otimes \mathcal{O}_X(1))) \geq \deg \mathcal{O}_X(N^{2c})$. Thus the degree (relative to the original $\mathcal{O}_X(1)$) is $\geq N^{2c} \deg \mathcal{O}_X(1) > \deg \omega_X$ for $c$ sufficiently large.
Hence, by Lemma \ref{CohoSerreVanishing} we have that the cohomology group $H^d(X, {N_X^e}^\ast\mathcal{S}_d(m) \otimes \mathcal{O}_X(1))$ vanishes. Consequently, $\mathcal{S}_1(m)$ is almost zero and we have the desired inclusion.
\end{proof}

\begin{Bem}
There is actually no advantage in only working with separable morphisms since for a locally free sheaf $\mathcal{E}$ on an abelian variety one has $\mu_{\min}(\mathcal{E}) = \bar{\mu}_{\min}(\mathcal{E})$ (see \cite[Theorem 2.1 and Remark 2.2]{mehtaramanathanhomogeneous}).
\end{Bem}

\begin{Bem}
Assuming that ample line bundles are almost zero in characteristic zero (with annihilators as roots of the given line bundle)\footnote{We do not know whether this is true but we suspect so.} one can show that the Cohen-Macaulay property is also not necessary for smooth projective surfaces in order to transfer the issue, whether a given cohomology class is almost zero, into a top-dimensional cohomology group. In this case the intermediate cohomology groups are just the first cohomology groups and Kodaira vanishing takes care of the negative twists. Furthermore, $H^1(X, \mathcal{O}_X)$ is almost zero by \cite[Theorem 3.4]{robertssinghannihilators} (provided that ample line bundles are almost zero this also follows by twisting with suitable roots of an ample line bundle).

In general, in characteristic zero, Kodaira's vanishing theorem (see \cite[Remark 7.15]{hartshornealgebraic}) implies that the intermediate cohomology of $\mathcal{O}_X(n)$ vanishes for $n < 0$ (again assuming that $X$ is a smooth projective variety). But one is still left with the problem of annihilating the higher intermediate cohomology groups for positive twists. This question is also related to the property of \emph{almost Cohen-Macaulay} -- see \cite[Definition 1.2 and the discussion thereafter]{robertssinghannihilators}. Almost Cohen-Macaulay does imply the vanishing of these intermediate cohomology groups. But it seems unclear whether it is equivalent to this condition.
\end{Bem}

The following provides examples where graded dagger closure is nontrivial in normal domains of dimension $\geq 2$ containing a field of characteristic zero. For non-normal domains already $IS \cap R$ may be strictly larger than $I$ for an ideal $I \subseteq R$, where $R \subseteq S$ is the normalisation. In positive characteristic this nontriviality is immediate since graded dagger closure contains graded plus closure which can be non-trivial for normal domains in positive characteristic.

\begin{Bsp}
\label{BspDaggerNontrivial}
Let $(X, \mathcal{O}_X(1))$ be a polarised abelian variety of dimension $d$ (e.\,g.\ the Jacobian of a curve of genus $d$). Assume that $\mathcal{O}_X(1)$ is symmetric, generated by global sections and denote its section ring by $R$. Fix homogeneous parameters $x_1, \ldots, x_{d+1}$ in $R_1$. We want to show that $(x_1, \ldots, x_{d+1})^{\dagger \text{GR}}$ is strictly larger than $(x_1, \ldots, x_{d+1})$.

Let \[ 0 \to \mathcal{O}_X(-d-1) \to \bigoplus_{i=1}^{d+1} \mathcal{O}_X(-d) \to \ldots \to \bigoplus_{i=1}^{d+1} \mathcal{O}_X(-1) \to \mathcal{O}_X\to 0\] be the sheafified Koszul complex (see \cite[Remark 1.6.15]{brunsherzog}).
Of course, the Koszul complex on $\Spec R$ is not exact since $x_1, \ldots, x_{d+1}$ do not form a regular sequence but the complex is exact for every localisation at a homogeneous prime $P \neq R_+$ (cf.\ \cite[Proposition 1.6.7]{brunsherzog} and [ibid., Theorem 1.6.16] -- $H_0$ vanishes since the ideal is primary). Hence, it is an exact complex on $\Proj R = X$.

Twisting by $\mathcal{O}_X(d+1)$ one has that the intermediate cohomology vanishes (apply Lemma \ref{KohoHoppingAb}). We therefore have an isomorphism $H^1(X, \mathcal{S}_1(d+1)) \to H^d(X, \mathcal{O}_X)$ (this is surjective since $H^d(X, \mathcal{O}_X(1)) = 0$). 
By virtue of Theorem \ref{AbelianInclusion} we obtain an inclusion $R_{d+1} \subseteq (x_1, \ldots, x_{d+1})^{\dagger \text{GR}}$. Moreover, since $\dim H^d(X, \mathcal{O}_X) = 1$ there are indeed elements in $R_{d+1}$ that are not contained in the ideal.
\end{Bsp}

In order to have an even more explicit example we will consider the case of an elliptic curve.

\begin{Bsp}
Consider $R = k[x,y,z]/(x^3 + y^3 + z^3)$ and assume $\chara k \neq 3$, where $k$ is an algebraically closed field. The corresponding curve $Y = \Proj R$ is elliptic. It is a classical example of tight closure theory that $z^2 \in (x,y)^\ast$ (see e.\,g.\ \cite[Example 1.9]{brennerbarcelona}). It is not contained in the ideal since $z^2 \neq 0$ in $k[z]/(z^3)$. We will show that $z^2 \in (x,y)^{\dagger \text{GR}}$.

The Koszul resolution of $(x,y)$ yields the short exact sequence $0 \to \mathcal{O}_Y(-2) \to \mathcal{O}_Y(-1)^2 \to \mathcal{O}_Y \to 0$ on $Y$. We therefore have $\Syz(x,y)(2) = \mathcal{O}_Y$ and the extension bundle $\mathcal{S}' = \Syz(z^2, x, y)(2)$  corresponding to $\delta(z^2) = \frac{z^2}{xy}$ yields the unique ruled surface $\mathbb{P}(\mathcal{S}'^\vee)$ over $Y$ that is given by the non-split extension $0 \to \mathcal{O}_Y \to \mathcal{S}'^\vee \to \mathcal{O}_Y \to 0$. For the uniqueness see \cite[Theorem 2.15]{hartshornealgebraic} and note that it does not split since $z^2 \notin (x,y)$.

We note that (up to a linear change of variables) this example also occurs in \cite[Example 2.4]{robertssinghannihilators}, where they used a similar technique. But their ring extensions do not stem from any multiplication map since their extensions multiply the degree by $3$ which is not a square.

Let $N =2$ and consider the morphism $N_Y: Y \to Y$ which has degree $N^2 = 4$. Pulling back $\mathcal{O}_Y(1)$ to this new copy of $Y$ we have $N_Y^\ast \mathcal{O}_Y(1) = \mathcal{O}_Y(N^2)$. Taking the section ring $R_1$ induced by $\mathcal{O}_Y(1)$ on this new copy of $Y$ we obtain a ring extension $R = R_0 \subseteq R_1$, where an element of degree $1$ in $R$ maps to an element of degree $4$. Hence, after regrading, $R_1$ is generated by elements of degree $1/4$. Iterating this process we obtain graded ring extensions $R_n \subseteq R_{n+1}$, where every $R_n$ is isomorphic to $R$ and the generators of $R_n$ are in degree $1/4^n$.

We now want to see how $\delta(z^2) = c \in H^1(Y, \mathcal{O}_Y)$ is annihilated by elements of arbitrarily small order in cohomology. So let $Y_n = \Proj R_n$ and pull back the whole situation to $Y_n$. We have $(N^n_Y)^\ast( \Syz(x, y)(2)) = \mathcal{O}_{Y_n}$. As $\mathcal{O}_{Y_n}(1)$ is ample and $Y_n$ is elliptic $H^1(Y_n, \mathcal{O}_{Y_n}(1)) = 0$ (cf.\ \cite[Example IV.1.3.4]{hartshornealgebraic}). In particular, Riemann-Roch yields that $\mathcal{O}_{Y_n}(1)$ has nonzero global sections. Fix any such nonzero global section $s$ and consider the induced morphism $\mathcal{O}_{Y_n} \to \mathcal{O}_{Y_n}(1)$. Taking cohomology we obtain that $s$ annihilates $(N^{n}_Y)^\ast\delta(z^2)$. And since $\mathcal{O}_{Y_n}(N^{2n}) = (N^n_Y)^\ast \mathcal{O}_Y(1)$ we have that $\frac{\deg \mathcal{O}_{Y_n}(1)}{\deg N^n_Y} = \frac{\deg \mathcal{O}_Y(1)}{N^{2n}} = \frac{\deg \mathcal{O}_Y(1)}{4^n}$ which goes to zero as $n$ tends to infinity.
\end{Bsp}

\section{Intermediate cohomology in positive characteristics}
In this section we want to point out that the assertions of \cite[Theorem 2]{brennerlinearfrobenius} can be weakened in so far as one may drop the assumption that $R$ be Cohen-Macaulay and that one may replace $R$ being standard graded with $R$ being a section ring.

The Cohen-Macaulay property is used to ensure that the intermediate cohomology of the twisted structure sheaves vanish. However, the following proposition shows that this may be circumvented.
 
\begin{Prop}
\label{HHVanishing}
Let $(X, \mathcal{O}_X(1))$ be a polarised projective variety over an algebraically closed field of characteristic $p > 0$ and $t_0 \in \mathbb{Z}$. Then there is a finite dominant morphism $f: X' \to X$ of varieties such that the induced maps $H^i(X, \mathcal{O}_X(t)) \to H^i(X', f^\ast \mathcal{O}_X(t))$ are zero for all $t \geq t_0$ and $1 \leq i < \dim X$.
\end{Prop}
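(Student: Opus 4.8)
The plan is to reduce the statement to the annihilation of finitely many cohomology classes, to kill the positively twisted ones by the absolute Frobenius, and to kill the remaining intermediate classes by the big Cohen--Macaulay property of $R^+$ in characteristic $p$ (this is essentially the Hochster--Huneke vanishing underlying the name of the proposition).

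\emph{Reduction to finitely many classes.} Since $\mathcal{O}_X(1)$ is ample, Serre vanishing supplies an integer $T$, independent of $i$, with $H^i(X,\mathcal{O}_X(t))=0$ for all $t>T$ and all $1\le i<\dim X$. Hence $\bigoplus_{t_0\le t\le T}\bigoplus_{1\le i<\dim X}H^i(X,\mathcal{O}_X(t))$ is a finite-dimensional $k$-vector space. It therefore suffices to produce, for each of the finitely many pairs $(i,t)$ with $t_0\le t\le T$ and each element $\eta$ of a basis of $H^i(X,\mathcal{O}_X(t))$, a finite dominant morphism of varieties annihilating $\eta$: replacing the various covers by an irreducible component of an iterated fibre product over $X$ that dominates $X$ (finiteness and surjectivity are inherited, and a class already mapping to $0$ keeps doing so) one obtains a single finite dominant $f\colon X'\to X$ with $f^\ast=0$ on every $H^i(X,\mathcal{O}_X(t))$ with $t\ge t_0$, $1\le i<\dim X$. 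One may also first replace $X$ by its normalisation, since for any finite $f$ the map $H^i(X,\mathcal{O}_X(t))\to H^i(X',f^\ast\mathcal{O}_X(t))$ factors through the corresponding map for the normalisation.

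\emph{Positively twisted classes.} For $t\ge 1$ take $f=F^e\colon X\to X$, the $e$-fold absolute Frobenius; it is finite and surjective and $(F^e)^\ast\mathcal{O}_X(t)=\mathcal{O}_X(p^e t)$. For $e\gg 0$ one has $p^e t>T$, so $H^i(X,\mathcal{O}_X(p^e t))=0$ and $(F^e)^\ast\eta=0$; one $e$ handles the finitely many relevant $(i,t)$ with $t\ge 1$ at once. Applying this Frobenius \emph{after} the cover $g\colon X''\to X$ used for the remaining classes keeps everything annihilated: a positively twisted class then lands in $H^i(X'',(g^\ast\mathcal{O}_X(1))^{\otimes p^e t})=H^i(X'',g^\ast\mathcal{O}_X(p^e t))$, which vanishes for $e\gg 0$ by Serre vanishing on $X''$, while the classes already killed by $g$ stay killed.

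\emph{The remaining intermediate classes -- the main obstacle.} What is left are the finitely many $\eta\in H^i(X,\mathcal{O}_X(t))$ with $t_0\le t\le 0$ and $1\le i<\dim X$; here Frobenius only pushes the twist further negative, so a genuinely different cover is needed. I would choose a very ample power $\mathcal{A}=\mathcal{O}_X(r)$ and, grouping the twists by their residue $j$ modulo $r$, write each such cohomology group as a graded piece $H^{i+1}_{\mathfrak m}(M)_n=H^i(X,\mathcal{O}_X(j)\otimes\mathcal{A}^n)$ of the local cohomology of the reflexive rank-one module $M=\bigoplus_n\Gamma(X,\mathcal{O}_X(j)\otimes\mathcal{A}^n)$ over the normal finitely generated graded section ring $R=\bigoplus_n\Gamma(X,\mathcal{A}^n)$, where $2\le i+1\le\dim X<\dim R$. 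The crucial input is then the theorem that in characteristic $p$ the absolute integral closure $R^+$ is a balanced big Cohen--Macaulay $R$-algebra (see \cite{hochsterhunekeinfinitebig}; also Huneke--Lyubeznik for a simpler proof), so that $H^j_{\mathfrak m}(R^+)=0$ for $j<\dim R$; since $R^+$ is the filtered union of module-finite normal graded extension domains $R'$ of $R$ and local cohomology commutes with filtered colimits, the fixed class $\eta$ dies already over some such $R'$. Geometrically this just says that intermediate sheaf cohomology of a line bundle is annihilated by a finite cover, so we may take $X'=\Proj R'$; combining these covers with the covers and the final Frobenius of the previous steps finishes the argument. The essential difficulty is confined to this step: unlike the positive case it is not an elementary Frobenius manipulation but rests on the big Cohen--Macaulay theorem (equivalently, on a Hochster--Huneke-type argument killing intermediate local cohomology classes in finite extensions), together with the bookkeeping needed to pass between twists of $\mathcal{O}_X$ on $X$ and local cohomology of a section ring when $\mathcal{O}_X(1)$ is not itself very ample.
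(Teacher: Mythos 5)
Your overall strategy coincides with the paper's: reduce to finitely many pairs $(i,t)$ via Serre vanishing, then kill each remaining class by a finite dominant cover and compose the covers. The paper does this by quoting the geometric form of the Hochster--Huneke vanishing theorem (\cite[Theorem 1.2]{hochsterhunekeinfinitebig}), which for each twist $\mathcal{O}_X(t)$ and each $1\le i<\dim X$ directly produces a finite surjective morphism killing $H^i(X,\mathcal{O}_X(t))$; in particular it covers the positive twists as well, so your separate Frobenius step for $t\ge 1$ is a correct but unnecessary shortcut. What you are really doing in the last step is re-deriving that cited theorem from the algebraic statement that $R^+$ is a (balanced) big Cohen--Macaulay algebra in characteristic $p$.

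The one point where your argument as written does not close is precisely there. The big Cohen--Macaulay theorem gives $H^j_{\mathfrak m}(R^+)=0$ for $j<\dim R$, a statement about the ring $R$; your classes live in $H^{i+1}_{\mathfrak m}(M)$ for the rank-one reflexive modules $M=\bigoplus_n\Gamma(X,\mathcal{O}_X(j)\otimes\mathcal{A}^n)$ with $j\not\equiv 0\ (\mathrm{mod}\ r)$, and the vanishing of $H^{i+1}_{\mathfrak m}(R^+)$ does not by itself say that the image of $H^{i+1}_{\mathfrak m}(M)$ dies in a finite extension. The standard repair is to work with the full section ring $S=\bigoplus_n\Gamma(X,\mathcal{O}_X(n))$ of the ample (not necessarily very ample) polarisation: $S$ is a finitely generated graded domain of dimension $\dim X+1$ with $X=\Proj S$ and $H^i(X,\mathcal{O}_X(t))=H^{i+1}_{S_+}(S)_t$ for $i>0$; since $S^+=R^+$ has vanishing local cohomology below $\dim S$ and is the filtered union of module-finite graded extension domains $S'$, each fixed class already dies over some $S'$, and $X'=\Proj S'$ is the desired cover. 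Equivalently, $S=\bigoplus_{j}M_j$ as an $R$-module, so each $M_j$ is a direct summand and inherits the conclusion. With that adjustment your proof is complete; it is, in substance, an unpacking of the result the paper invokes as a black box.
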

\begin{proof}
To begin with one has $F^\ast \mathcal{O}_X(1) = \mathcal{O}_X(p)$, where $F$ denotes the relative Frobenius, so we may assume that $\mathcal{O}_X(1)$ is very ample. Moreover, there is a $t_1$ such that for all $t \geq t_1$ the cohomology groups $H^i(X, \mathcal{O}_X(t))$ vanish for $i > 0$ by virtue of \cite[Proposition III.5.3]{hartshornealgebraic}. By a vanishing theorem of Hochster and Huneke (see \cite[Theorem 1.2]{hochsterhunekeinfinitebig}) there are finite dominant morphisms $f_{i,t}: X_{i,t} \to X$ for $1 \leq i < \dim X$ and $t \in \mathbb{Z}$, where $X_{i,t}$ are varieties, such that the induced morphisms $H^i(X, \mathcal{O}_X(t)) \to H^i(X_{i,t}, f_{i,t}^\ast \mathcal{O}_X(t))$ are zero. Repeatedly applying this theorem for $t_1 \leq t < t_0$ and all $1 \leq i < \dim X$ we deduce that there is a finite dominant morphism with the desired properties.
\end{proof}

\begin{Bem}
If in addition $X$ is Cohen-Macaulay then $H^i(X, \mathcal{O}(-t))$ is dual to $H^{\dim X - i}(X, \mathcal{O}(t) \otimes \omega_X^{\circ})$, where $\omega_X^{\circ}$ is the dualising sheaf of $X$ (cf.\ \cite[Corollary III.7.7]{hartshornealgebraic}). And again by \cite[Proposition III.5.3]{hartshornealgebraic} these groups vanish for $i > 0$ and $t \gg 0$. In particular, there is a finite dominant morphism $f: X' \to X$, where $X'$ is a variety, such that all maps $H^i(X, \mathcal{O}_X(t)) \to H^i(X', f^\ast \mathcal{O}_X(t))$ are zero for $i >0$ and $t \in \mathbb{Z}$.
\end{Bem}

We thus obtain that for an arbitrary polarised projective variety $X$ over an algebraically closed field of characteristic $p > 0$ the assumption that the section ring be Cohen-Macaulay is also not necessary in order to transfer the issue whether an intermediate cohomology class is almost zero into a top-dimensional cohomology class.

To see this fix a resolution of the syzygy bundle in question. Since we only need to look at finitely many twists we may apply Proposition \ref{HHVanishing} to see that all intermediate cohomology of the occurring twisted structure sheaves is mapped to zero along a finite dominant pullback. Hence, if a cohomology class is mapped to zero during ``cohomology hopping'' with respect to a fixed resolution then it is almost zero.
In particular, the assumption that $R$ be Cohen-Macaulay in \cite[Theorem 2]{brennerlinearfrobenius} may be omitted (one can still apply the Serre-Duality argument since $\Proj R$ has a dualising sheaf -- cf. \cite[Proposition III.7.5]{hartshornealgebraic}) and one may also relax the condition that the ring $R$ is standard graded to $R$ being a section ring. This follows since tight closure is also invariant under finite pullbacks.
We cannot weaken the conditions of \cite[Theorem 1]{brennerlinearfrobenius} however. Indeed, Proposition \ref{HHVanishing} makes essential use of passing to finite ring extensions and this may change the Frobenius closure.

\begin{Ko}
Let $(X, \mathcal{O}_X(1))$ be a polarised projective variety over an algebraically closed field $k$ of positive characteristic. Then any semiample line bundle $\mathcal{L}$ on $X$ is almost zero and we can choose the annihilators as suitable roots of $\mathcal{O}_X(1)$.
\end{Ko}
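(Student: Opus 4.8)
The plan is to run the argument of Theorem \ref{AbelianInclusion} with powers of the Frobenius $F$ in place of the multiplication maps $N_X$, and with Fujita's vanishing theorem in place of the vanishing theorem for abelian varieties. Note that ``almost zero'' for a locally free sheaf in Definition \ref{AlmostZeroDefGeneral} only refers to classes in $H^1$, so in contrast to the syzygy bundle situation of Theorem \ref{AbelianInclusion} no cohomology hopping is needed here (neither Lemma \ref{KohoHoppingAb} nor Proposition \ref{HHVanishing} enters): one kills the relevant $H^1$ directly after a Frobenius pullback.

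Concretely, I would first observe that a semiample line bundle $\mathcal{L}$ is nef, since a globally generated line bundle is nef and nef is insensitive to roots; hence every power $\mathcal{L}^b$ with $b \geq 0$ is nef. By Fujita's vanishing theorem for the ample bundle $\mathcal{O}_X(1)$ and the coherent sheaf $\mathcal{O}_X$ there is an $m_0 \geq 1$ with $H^1(X, \mathcal{O}_X(m) \otimes \mathcal{N}) = 0$ for every $m \geq m_0$ and every nef line bundle $\mathcal{N}$; enlarging $m_0$ I may also assume that $\mathcal{O}_X(m_0)$ has a nonzero global section. Now let $c \in H^1(X, \mathcal{L})$ and $\eps > 0$ be given, put $d = \dim X$ and $D = \mathcal{O}_X(1)^d > 0$, choose $j$ with $p^j \geq m_0$ and then $e > j$ with $p^{j-e} D < \eps$, and take $\varphi = F^e \colon X \to X$ the $e$-fold absolute Frobenius. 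It is a finite surjective morphism of degree $p^{ed}$ with $\varphi^\ast \mathcal{O}_X(1) = \mathcal{O}_X(p^e)$ and $\varphi^\ast \mathcal{L} = \mathcal{L}^{p^e}$. Set $\mathcal{M} = \mathcal{O}_X(p^j)$: then $\mathcal{M}^{p^{e-j}} = \mathcal{O}_X(p^e) = \varphi^\ast \mathcal{O}_X(1)$, so $\mathcal{M}$ is a root of $\varphi^\ast \mathcal{O}_X(1)$; it is ample, hence semiample, it has a nonzero global section $s$ by the choice of $j$, and its degree with respect to $\varphi^\ast\mathcal{O}_X(1)$ satisfies $\deg \mathcal{M}/\deg \varphi = p^{\,j + e(d-1)} D / p^{ed} = p^{j-e} D < \eps$. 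Finally $s\,\varphi^\ast(c)$ lies in $H^1(X, \varphi^\ast\mathcal{L} \otimes \mathcal{M}) = H^1(X, \mathcal{L}^{p^e} \otimes \mathcal{O}_X(p^j))$, which vanishes by the choice of $m_0$ (apply Fujita with $m = p^j \geq m_0$ and $\mathcal{N} = \mathcal{L}^{p^e}$). Hence $s\,\varphi^\ast(c) = 0$, so $c$ is almost zero with annihilating bundle the root $\mathcal{M}$ of $\varphi^\ast\mathcal{O}_X(1)$; since $c$ and $\eps$ were arbitrary, $\mathcal{L}$ is almost zero.

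The step I expect to be the real content is the uniform vanishing $H^1(X, \mathcal{O}_X(m) \otimes \mathcal{L}^b) = 0$ for one $m_0$ independent of $b$: this is exactly Fujita's vanishing theorem, and it is what lets the choices of $j$ and $e$ decouple. With only Serre vanishing the threshold for $m$ would depend on $b = p^e$, and one would have to play that dependence off against the relative degree $p^{j-e}D$. Everything else is formal: in characteristic $p$ the Frobenius is a finite surjective self-map which multiplies the polarisation by $p$ and has degree $p^{\dim X}$, so a fixed twist $\mathcal{O}_X(p^j)$ becomes, after composing enough Frobenii, a root of $\mathcal{O}_X(1)$ of arbitrarily small relative degree --- precisely the role $N_X$ plays in Theorem \ref{AbelianInclusion}. (For $d = 1$ Fujita's input degenerates to $H^1(X, \mathcal{N}) = 0$ for $\deg \mathcal{N} > 2g-2$, so this recovers the one-dimensional case of \cite{brennerstaeblerdaggersolid}.)
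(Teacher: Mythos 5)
Your proof is correct, but it takes a genuinely different route from the paper's. The paper first reduces to the case that $\mathcal{L}$ is globally generated via Proposition \ref{PullbackSemiampleGloballygenerated}, then uses \cite[Lemma 3.2]{brennerstaeblerdaggersolid} to manufacture a globally generated small root $\mathcal{M}$ of $\mathcal{O}_X(1)$ on a finite cover $\varphi\colon X'\to X$, notes that $\varphi^\ast\mathcal{L}\otimes\mathcal{M}$ is then ample, and finally kills $H^1(X',\varphi^\ast\mathcal{L}\otimes\mathcal{M})$ by a \emph{further} finite cover using Proposition \ref{HHVanishing} --- i.e.\ ultimately the Hochster--Huneke vanishing theorem for $R^+$. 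You instead take the covers to be iterates of the (absolute) Frobenius, the annihilating roots to be explicit twists $\mathcal{O}_X(p^j)$, and you show that the target group $H^1(X,\mathcal{L}^{p^e}\otimes\mathcal{O}_X(p^j))$ is literally zero by Fujita's vanishing theorem, which indeed holds in arbitrary characteristic. You correctly isolate the one non-formal input: the uniformity of Fujita's threshold $m_0$ over all nef twists is exactly what decouples $j$ from $e$ and lets $p^{j-e}D$ be made small; Serre vanishing alone would not do this. Your argument buys a more self-contained and explicit proof (no appeal to the deep $R^+$ machinery, no normalisation or global-generation reductions, and the annihilators are concrete roots of $\mathcal{O}_X(1)$), at the price of invoking Fujita's theorem, which the paper does not use anywhere; the paper's route has the advantage of reusing Proposition \ref{HHVanishing}, which it needs elsewhere in this section anyway, and of generalising to the cohomology-hopping situation where the sheaf being twisted is not nef. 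One cosmetic point: you should enlarge $m_0$ so that $\mathcal{O}_X(m)$ has a nonzero section for \emph{all} $m\geq m_0$ (which is automatic for $m\gg 0$ since $\mathcal{O}_X(1)$ is ample), not just for $m=m_0$, since you need a section of $\mathcal{O}_X(p^j)$ with $p^j\geq m_0$.
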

\begin{proof}
Let $\eps > 0$. Applying Proposition \ref{PullbackSemiampleGloballygenerated} we may assume that $\mathcal{L}$ is globally generated. Applying \cite[Lemma 3.2]{brennerstaeblerdaggersolid} and Proposition \ref{PullbackSemiampleGloballygenerated} we may assume that there is a finite dominant morphism $\varphi: X' \to X$ of varieties such that there is a globally generated line bundle $\mathcal{M}$ on $X'$ which is an $n$th root of $\mathcal{O}_X(1)$ such that $\frac{\deg \mathcal{M}}{\deg \varphi} < \eps$ (simply choose $n$ sufficiently large). Since $\mathcal{M}$ is globally generated we have an induced morphism $\varphi^\ast \mathcal{L} \to \varphi^\ast \mathcal{L} \otimes \mathcal{M}$ and $\varphi^\ast \mathcal{L} \otimes \mathcal{M}$ is ample since $\varphi^\ast \mathcal{L}$ is globally generated (see \cite[Ex.\ II.7.5 (a)]{hartshornealgebraic}). By Proposition \ref{HHVanishing} there is a pullback along a finite dominant morphism which annihilates $H^1(X', \varphi^\ast \mathcal{L} \otimes \mathcal{M})$. In particular, $\mathcal{L}$ is almost zero and we can choose the annihilators as suitable roots of $\mathcal{O}_X(1)$.
\end{proof}

\begin{Bem}
It is actually true that some pullback along a finite dominant morphism already annihilates the cohomology of a semiample line bundle in positive characteristic -- see \cite[Proposition 6.2]{bhattderivedsplinters}.
\end{Bem}

\bibliography{bibliothek.bib}
\bibliographystyle{amsplain}

\end{document}